\definecolor{green}{RGB}{89,169,58}
\definecolor{red}{RGB}{224,61,42}
\definecolor{blue}{RGB}{63,78,181}
\newcommand{\N}{\ensuremath{\mathbb{N}}}
\newcommand{\R}{\ensuremath{\mathbb{R}}}
\newcommand{\Z}{\ensuremath{\mathbb{Z}}}
\newcommand{\E}{\ensuremath{\mathbb{E}}}
\renewcommand{\P}{\ensuremath{\mathbb{P}}}
\newcommand{\ind}[1]{\ensuremath{\mathbbm{1}_{\left\{#1\right\}}}}
\newcommand{\diff}{\mathop{}\mathopen{}\mathrm{d}}
\newcommand{\cal}[1]{\ensuremath{\mathcal{#1}}}
\newcommand\croc[1]{\left\langle #1\right\rangle}
\newcommand\steq[1]{\stackrel{\text{\rm #1.}}{=}}
\newcommand{\I}[1]{\ensuremath{I_{[#1]}}}
\def\eps{\varepsilon}
\def\cadlag{c\`adl\`ag }
\newtheorem{proposition}{Proposition}
\newtheorem{definition}[proposition]{Definition}
\newtheorem{lemma}[proposition]{Lemma}
\newtheorem{theorem}[proposition]{Theorem}
\newtheorem{corollary}[proposition]{Corollary}
\title[k-Unary CRN]{Analysis of Stochastic Chemical Reaction Networks with a Hierarchy of Timescales}
\date{\today}
\author[L. Laurence]{Lucie Laurence${ }^1$}
\email{Lucie.Laurence@inria.fr}
\address[L. Laurence, Ph. Robert]{INRIA Paris, 48, rue Barrault, CS 61534, 75647 Paris Cedex, France}
\author[Ph. Robert]{Philippe Robert}
\email{Philippe.Robert@inria.fr}
\urladdr{http://www-rocq.inria.fr/who/Philippe.Robert}
\thanks{${}^1$Supported by PhD grant of ENS-PSL}
\begin{document}
\begin{abstract}
 We investigate a class of stochastic chemical reaction networks  with $n{\ge}1$ chemical species $S_1$, \ldots, $S_n$, and whose complexes are only of the form $k_iS_i$, $i{=}1$,\ldots, $n$, where $(k_i)$ are integers. The time evolution of these CRNs  is driven by the kinetics of the law of mass action. A scaling analysis is done when the rates of external arrivals of chemical species are proportional to a large scaling parameter $N$. A natural hierarchy of fast processes, a subset of the coordinates of $(X_i(t))$,  is determined by the values of the mapping $i{\mapsto}k_i$. We show that the scaled vector of coordinates $i$ such that $k_i{=}1$ and the scaled occupation measure of the other coordinates are converging in distribution to a deterministic limit as $N$ gets large. The proof of this result is obtained by establishing a functional equation for the limiting points of the occupation measure,  by an induction on the hierarchy of timescales and with relative entropy functions.
\end{abstract}
\maketitle

\vspace{-5mm}

\bigskip

\hrule

\vspace{-3mm}

\tableofcontents

\vspace{-1cm}

\hrule

\bigskip

\section{Introduction}\label{Intro}
A {\em stochastic chemical reaction network} (CRN) with $n$ chemical species is  described as a continuous time Markov process $(X_i(t))$ on a subset of $\N^n$. The $i$th component gives the number of molecules of chemical species $S_i$, $1{\le}i{\le}n$. Its dynamical behavior is given by a finite set of chemical reactions which add or remove simultaneously  a finite number of several chemical species. For example, the reaction
\begin{equation}\label{Rex}
k_1S_1{+}k_2S_{2}  \xrightharpoonup{\kappa} k_3S_{3}
\end{equation}
transforms $k_1$ molecules of $S_1$ and $k_2$ molecules of $S_2$ into $k_3$ molecules of $S_3$. The associated transition of this reaction for the Markov process is
\[
x{=}(x_i)\to x{+}k_3e_3{-}k_1e_1{-}k_2e_2,
\]
where $e_i$, $1{\le}i{\le}n$, is the $i$th unit vector of $\N^n$. The rate at which the reaction occurs is assumed to follow the {\em law of mass action},  for our example the rate is given by 
\begin{equation}\label{LMAex}
\kappa x_1^{(x_1)} x_2^{(x_2)}\steq{def}\kappa\frac{x_1!}{(x_1{-}k_1)!}\frac{x_2!}{(x_2{-}k_2)!},
\end{equation}
for some positive constant $\kappa$. See~Section~\ref{Model}. 

From a mathematical point of view, there are two important characteristics of stochastic models of CRNs described with Markov processes. 
\begin{enumerate}
\item {\sc Polynomial Reaction Rates.}\label{Polit}\\
When the coordinates $x_1$ and $x_2$ are large, the reaction rate~\eqref{LMAex} is of the order of $\kappa x_1{}^{k_1}x_2{}^{k_2}$. This implies that some reactions will be much more likely than others, and therefore will dominate the kinetics of the CRN, for a while at least. In this case, we will speak of fast processes for the coordinates involved in these reactions. There are many examples of such behavior. See~\citet{Agazzi2018}, \citet{BallKurtz}, \citet{Togashi_2001} and Sections~6, 7, 8 of~\citet{LR23} for example. This is a major  feature of CRNs from a technical point of view. In such a case, a CRN can be described as driven by a set of interacting fast processes leading to an investigation of possible stochastic averaging principles or even more complex multi-timescales behaviors. See Section~\ref{22LitSec}.

\item {\sc Boundary Behavior.}\\
This feature is due to a constraint on the state space rather than a property related to the order of magnitude of transition rates. In state $x{=}(x_i){\in}\N^n$, Reaction~\eqref{Rex} occurs only if $x_1{\ge}k_1$ and $x_2{\ge}k_2$. Mathematically, this is a kind of discontinuity of the kinetics of the CRN.
This constraint on the state space is at the origin of  complex behaviors of CRNs. In the CRN of example \eqref{Rex}, if we assume that $X_1(0){=}N$ is large and that the process $(X_2(t))$ remains in a neighborhood of $0$, then the process $(X_1(t))$ will decrease only during the excursions of $(X_2(t))$ above $k_2$. This can be even more complicated if the dynamic of $(X_2(t))$ depends, via other chemical reactions, on $(X_3(t))$ for example.  For example of such  complex behaviors, see  Section~8 of~\cite{LR23}  and~\citet{LR24}. 
\end{enumerate}

\subsection{$\mathbf{k}$-Unary Chemical Reaction Networks}
We now describe the class of CRNs analyzed in our paper. As it will be seen boundary behaviors  play only a marginal role in the time evolution of these networks. The characteristic~\eqref{Polit} on the polynomial growth is the key feature. 

The parameters of the kinetics of these networks are given by the coefficients of a matrix $R_{\kappa}{=}(\kappa_{ij}, 0{\le}i, j{\le}n){\in}\R_+^{n+1}{\times}\R_+^{n+1}$, and a vector $(k_i){\in}\N^n$ of integers.  The only chemical reactions for this class of CRNs are as follows, for $1{\le}i{\ne}j{\le}n$,
\[
k_iS_i \xrightharpoonup{\kappa_{ij}} k_jS_{j},\quad k_iS_i \xrightharpoonup{\kappa_{i0}}\emptyset,\quad  \emptyset \xrightharpoonup{\kappa_{0i}}k_iS_i,
\]
provided that, respectively,  $\kappa_{ij}{>}0$, $\kappa_{i0}{>}0$, or $\kappa_{0i}{>}0$.  The second reaction, resp. last reaction,  is the spontaneous destruction, resp. creation, of $k_i$ molecules of chemical species $S_i$. The symbol $\emptyset$ is the source/sink for chemical species.

For $1{\le}i{\le}n$,  $k_iS_i$ is the only {\em complex} involving  the chemical species $S_i$ and  the time evolution of the $i$th coordinate is a jump process whose jumps are ${\pm}k_i$.   In state $x{=}(x_k)$, for $i\in\{1,\ldots,n\}$, the $i$th coordinate decreases at a rate proportional to $x_i^{(k_i)}$ and, for $1{\le}j{\le}n$,   $\kappa_{ij}x_i^{(k_i)}$  is the rate at which  $k_i$ molecules of $S_i$ are transformed into  $k_j$ molecules of $S_j$. These are the kinetics of the law of mass action.  See Section~\ref{Model}. 

This class of CRNs has in fact an invariant distribution, see Relation~\eqref{invMeas} of Section~\ref{Def0},  given by a product of Poisson distributions. If this is satisfactory, it should be noted that there are many very different Markov processes with this property, see~\cite{Kelly}. It does not give much insight on the transient characteristics of the CRNs, in particular on the impact of its different timescales of this CRNs, if any.

A scaling approach is proposed to investigate the dynamical behavior of these networks. We quickly review several scalings already used in the literature of stochastic CRNs. 

\subsection{Scaling Methods for Chemical Reaction Networks}\label{ScSecC}
We denote by $N$ the scaling parameter. 
\begin{enumerate}
\item  Classical Scaling.\\
For this scaling the reaction rate $\kappa_r$ of a chemical reaction $r$,   is scaled in $N$, as $\kappa_r/N^\gamma_r$ for some $\gamma_r{\ge}0$, so that if all coordinates of the associated Markov process $(X(t)){=}(X_i(t))$ are of the order of $N$, then the transition rate of any jump of the process  is of the order of $N$. See~\citet{Mozgunov} or Proposition~2 of ~\citet{LR23}for example. In this case, under appropriate conditions, it can be shown that the process $(X_i^N(t)/N)$ is converging in distribution to the solution of an ODE whose stability properties have been investigated in the literature of deterministic CRNs. See~\citet{Feinberg1972} and~\citet{Horn1972} for example. 

This scaling has the effect of  somewhat equalizing the kinetics of the CRNs. There cannot be a subset of chemical reactions dominating at some moment for a while, since all transition rates are of the order of $N$.

Kurtz and co-authors have also investigated several examples of CRNs with related scaling methods. In this approach, some reaction rates  may be sped-up with some power of the scaling parameter and the state variables are scaled accordingly. There is no requirement that all reactions have the same order of magnitude. The initial motivation was of fitting the parameters of these scaling models with  biological data obtained from experiments.  See for example~\citet{BallKurtz}, \citet{KangKurtz}, and~\citet{Kim2017} where, for several examples of CRNs,  the choice of convenient scalings of reaction rates is investigated  and several limit theorems are derived. 

\item Scaling with the norm of the initial state.\\
In this approach the reaction rates $\kappa_r$ are fixed so that the topology of the CRN is preserved by the scaling. The scaling parameter for the Markov process $(X(t))$ is $N{=}\|X(0)\|$. The approach consists in describing, via possibly functional limit theorems, how the sample path of the state of the CRN returns to a neighborhood of the origin. This is a natural way to investigate positive recurrence properties of the CRNs but, more importantly, it can provide insight into  transient characteristics of CRNs. Up to now there are few results in the literature in this domain, see~\citet{AgazziDembo2} and~\cite{AgazziDembo}, ~\citet{Mielke}, and~\citet{Sweeney}. For the scaling with $\|X(0)\|$, see~\citet{LR23} and references therein. 
\end{enumerate}

\subsection*{Scaling External Input Rates}
The scaling investigated in this paper is as follows. For all $i{\in}\{1,\ldots,n\}$ such that $\kappa_{0i}{>}0$, the creation of chemical species $S_i$ is scaled by $N$,   it becomes
\[
 \emptyset \xrightharpoonup{N\kappa_{0i}}k_iS_i.
 \]
 The other reaction rates do not change. Rather than starting from a ``large'' initial state, this scaling regime  assume heavy traffic conditions at the entrance of the CRNs. A natural question in this setting is of establishing a limit theorem on the orders of magnitude in $N$ of the coordinates of $(X_N(t)){=}(X_i^N(t))$. This scaling has already been considered in~\citet{Togashi_2007} for CRNs and in~\citet{BallKurtz}, and probably in many other examples. A related scaling has also been used to investigate the transient behavior of Markov processes for stochastic models of large communication networks in~\citet{Kelly1986}. See also~\citet{Kelly} for a survey.

A basic example of such a situation is the $k$-unary CRN with one chemical species,
\[
	\emptyset \mathrel{\mathop{\xrightleftharpoons[\mu]{\lambda N}}} k_1 S_1.
\]
It can be easily seen that, under convenient initial conditions, the scaled process
\begin{equation}\label{EhrL}
\left(\frac{X_1^N\left(t/N^{(1{-}1/k_1)}\right)}{N^{1/k_1}}\right)
\end{equation}
converges in distribution to a non-trivial deterministic function, the solution of an ODE.  See Proposition~\ref{EhrProp}.

\subsection*{A Hierarchy of Timescales}
We come back to our CRNs  under the heavy traffic assumptions, i.e. with all external input rates scaled by $N$. Heuristically, if there is a kind of equilibrium of flows in the network at some moment, due to the external inputs of the order of $N$, the input flow through each node  should be also of the same order of $N$.

The case of the CRN with a single node suggests then that the  state variable of the $i$th node $(X^N_i(t))$, $1{\le}i{\le}n$,  should be of the order of $N^{1/k_i}$. The convergence result for the process~\eqref{EhrL} indicates that the ``natural'' timescale of $(X_i(t))$ should be $(t/N^{(1{-}1/k_i)})$. In particular, this implies that, at the ``normal''  timescale $(t)$,  all coordinates  $(X_i(t))$ whose index $i{\in}\{1,\ldots,n\}$ is such that $k_i{\ge}2$,  are fast processes. The CRN exhibits  in fact a hierarchy of timescales: The process associated to $(X_j(t))$ is faster than the process $(X_i(t))$ provided that $k_j{>}k_i$. A limit theorem to establish the convergence  of the scaled process
\begin{equation}\label{ScPrI}
\left(\frac{X_i^N(t)}{N^{1/k_i}}\right)
\end{equation}
has to handle this multi-timescales feature and also the interactions with the other coordinates. 

\subsection{Literature}\label{22LitSec}
A classical way of investigating multi-timescales processes is via the proof of an averaging principles. 
Averaging principles have already  been used in various situations to study chemical reaction networks (CRNs). In most of cases, it involves two timescales: there are  a fast process and a slow process. Early works on the proof of averaging principles are due to Has'minski\v{\i}. See~\citet{Khasminskii0,Khasminskii1}. Chapter~7 of~\citet{Freidlin} considers these questions in terms of the convergence of Cesaro averages of the fast component. \citet{Papanicolaou} has introduced a  stochastic calculus approach to these problems, mainly for diffusion processes. \citet{Kurtz1992} has extended this approach to jump processes. For CRNs, there are numerous proofs of averaging principles in such a setting: \citet{BallKurtz}, \citet{KangKurtz},  \citet{Kim2017}, \citet{LR23,LR24}, \ldots

With more than two timescales, limit theorems  in a stochastic framework are quite scarce in the literature. 
A model with three timescales is investigated in~\citet{KangKurtzPop}, and a functional central limit result is established.  In this reference, it is  assumed that the first order is deterministic.  To handle the two fast timescales, several assumptions on uniform convergence of infinitesimal generators  on compact subsets of the state space are introduced. Large deviations results are derived with similar assumptions in~\citet{Popovic2019}.  It does not seem that such an approach can be used in our case. 

A stochastic model of a CRN with three timescales is analyzed in~\citet{FRZ23}. The limiting behavior of the occupation measure of the  processes associated to the two fast timescales is investigated. The main difficulty is of identifying the possible limits. A technical result on conditional probabilities is the major ingredient to solve this problem. This method do not seem to be possible for our CRN, mainly because there are too many fast timescales a priori, so that an analogous result on conditional probabilities is not clear.

\subsection{Outline of the Paper}
The goal of this paper is of establishing a limit theorem for the convergence in distribution of the scaled process defined by Relation~\eqref{ScPrI}~:
\begin{itemize}
\item For the occupation measure of the coordinates of the Markov process whose indices $i{\in}\{1,\ldots,n\}$ are such that $k_i{\ge}2$;
\item For the vector of the other components, i.e. indices  $i{\in}\{1,\ldots,n\}$ with $k_i{=}1$,  for the uniform topology.
\end{itemize}
See Theorem~\ref{Theorem} for the full statement. The proof of this result is done in several steps. 
\begin{enumerate}
\item Technical estimates of the ``basic'' model of a $k$-unary CRN with one chemical species in Section~\ref{Ehrenfest};
\item Tightness results for the occupation measure by using (a) and linear algebra arguments in Section~\ref{Bounds};
\item Identification of the limit of the sequence of occupation measures. This is done first by establishing a functional equation for some marginals of the possible limiting points, Relation~\eqref{eqLimitMeas} of Proposition~\ref{propEqLimit}, and then by induction on the hierarchy of timescales starting from the fastest timescale. Relative entropy functions associated to each timescale and convexity arguments are the main ingredients of the proofs. In Section~\ref{FastSec} when all $k_i$, $i{=}1,\ldots,n$, are greater than $2$, and Section~\ref{1GenSec} for the general case. 
\end{enumerate}

\section{Stochastic Model}\label{Model}
We introduce the formal definitions and notations used throughout the paper. 
\subsection{The class of $k$-unary Chemical Reaction Networks}\label{kDef}
\begin{definition}[$k$-unary CRN]\label{CondConnect}
The components of  a \emph{$k$-unary  chemical reaction network}  are :
\begin{enumerate}
\item A set of $n$ distinct {\em chemical species} ${\cal S}{=}\{S_1,\ldots,S_n\}$. The set ${\cal S}$ is also identified to $\{1,\ldots,n\}$ and  $\emptyset$ is the source/sink for chemical species, it is associated to index $i{=}0$ in general;
\item {\em Complexes} ${\cal C}$  are of the form $k_iS_i$,  $i{=}1$, \ldots, $n$, we will have the convention $k_0{=}0$.  Each species is present in exactly one complex. 
\item The {\em rates of chemical reactions} are associated to a $Q$-matrix $R_{\kappa}{=}(\kappa_{ij},i,j{\in}I)$ of a jump Markov process on $I{=}\{0,\ldots,n\}$ in the following way: If $i$, $j{\in}I$ are such that $\kappa_{ij}{>}0$, then there is the reaction 
\[
\begin{cases}
  k_iS_i\xrightharpoonup{\kappa_{ij}} k_jS_j& \text{ if } i{\ne}0;\\
  \emptyset\xrightharpoonup{\kappa_{0j}N} k_jS_j& \text{ if } i{=}0,
\end{cases}
\]
where $N$ is the {\em scaling parameter}. These are the only possible reactions.
\end{enumerate}
\end{definition}
Note that the process associated to the $Q$-matrix $R_{\kappa}$ \emph{is not} the process  describing the time evolution of the CRN,  it is a jump  process on the finite set $I$. 
The state of the CRN is given by $(X_N(t)){=}(X_{N,i}(t))$, a Markov process with values in $\N^n$.  
Since, for $i{\in}\{1,\ldots,n\}$, the sizes of jumps of the number of copies of chemical species $i$ are either $\pm k_i$, a natural state space  for this process is
\begin{equation}\label{StSp}
{\cal S}_a{=}\left\{x{=}(x_i){=}(a_1{+}m_1k_1,a_2{+}m_2k_2, \ldots ,a_n{+}m_nk_n): (m_i){\in}\N^n\right\}, 
\end{equation}
for any $a{\in}\{0,\ldots,k_1{-}1\}{\times}\{0,\ldots,k_2{-}1\}{\times}\cdots {\times}\{0,\ldots,k_n{-}1\}$.

The kinetics of the system are driven by   \emph{the law of mass action}, see~\citet{Voit2015}, \citet{lund1965guldberg} for surveys on the law of mass action and the historical reference \citet{guldberg1864studies}. The associated transitions are thus given by, for $x{\in}{\cal S}_a$, $i$, $j{\in}I$, $i{\ne}0$, 
\[
	x{=}(x_i)\rightarrow x+
	\begin{cases}
		k_je_j-k_ie_i, \quad &\text{at rate} \quad \kappa_{ij}x_i^{(k_i)}\\
		k_ie_i,\quad &\phantom{at} ``\phantom{rate} \quad \kappa_{0i}N\\
		-k_ie_i,\quad &\phantom{at} ``\phantom{rate} \quad \kappa_{i0}x_i^{(k_i)}.
	\end{cases}
\]
where $e_i$ is the $i$th unit vector of $\N^{n}$ and, for $y$, $k{\in}\N$,
\begin{equation}\label{Powk}
	y^{(k)}= \frac{y!}{(y{-}k)!},
\end{equation}
if $y{\ge}k$ and $y^{(k)}{=}0$ otherwise.

Such CRNs have a \emph{fast input}, in the sense that the rates of creations of chemical species  are proportional to a (large)  scaling factor $N$, and these are the only chemical reactions which are sped-up. 

\begin{figure}[ht]
  \centerline{
    \begin{tikzpicture}[->,node distance=2cm]
			\node (A) [below] {$\emptyset$};
			\node	(E) [right of=A] {$3S_1$};
			\node (F) [right of =E] {$3S_2$};
			\node (C) [below of=E] {$2S_3$};
			\node (B) [below of=F] {$S_4$};
			\draw[-left to] (A)  -- node[rotate=0,above] {$\kappa_{01}N$} (E);
			\draw[-left to] (E)  -- node[rotate=0,above] {$\kappa_{12}$} (F);
			\draw[-left to] (E)  -- node[rotate=0,left] {$\kappa_{13}$} (C);
			\draw[-left to] (F)  -- node[rotate=0,right] {$\kappa_{23}$} (C);
			\draw[-left to] (C)  -- node[rotate=0, left] {$\kappa_{30}$} (A);
			\draw[-left to] (F)  -- node[rotate=0, right] {$\kappa_{24}$} (B);
			\draw[-left to] (C)  -- node[rotate=0, below] {$\kappa_{34}$} (B);
			\draw[-left to] (B)  -- node[rotate=0, above] {$\kappa_{43}$} (C);
  \end{tikzpicture}}
\caption{An example of a $k${-}unary CRN}\label{FourSpecies}
\end{figure}
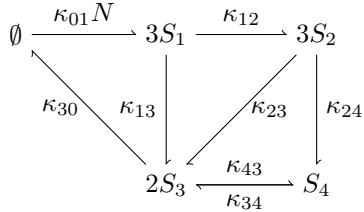

\subsection{Relations with Some Queueing Networks}
A $k$-unary network can be  related to several queueing systems.
\begin{enumerate}
\item When there is one chemical species, this is a generalized $M/M/\infty$ queue. See Section~\ref{Ehrenfest}.
\item Queueing networks referred to as {\em Jackson Networks}. They can be described simply as follows. 
\begin{itemize}
\item There are $n$ sites for the location of jobs. If $x{=}(x_j){\in}\N^n$, for $1{\le}j{\le}n$, $x_j$ denotes the number of jobs at the site $j$. 
\item One of the jobs at site $i$ leaves at rate $\mu_{ij}{>}0$ to go to site $j{\in}\{1,\ldots,n\}$, or leave the network at rate $\mu_{i0}$
\item External jobs arrive  at the site $i$ at rate $\mu_{0i}{\ge}0$.
\end{itemize}
The main difference with our CRNs is that the $i$th coordinate, $i{\in}\{1,\ldots,n\}$ decreases at a fixed rate if it is not $0$, instead of a rate proportional to $x_i^{(k_i)}$ for a $k$-unary CRN. There is a scaling result with the norm of the initial state for these networks in~\citet{Chen1991},    see Section~\ref{ScSecC}. The scaling results are quite different, there is only one timescale for Jackson  networks. Nevertheless,   as for our CRNs, a linear system plays an important role in the limit theorems associated to these Markov processes. See Relation~\eqref{system2} in Proposition~\ref{InvK} for $k$-unary CRNs and, for Jackson networks, see Proposition~9.6 of~\citet{Robert}. 
\end{enumerate}

\subsection{Notations}\label{Not}
Throughout the paper, the following notations will be used. For a subset $A$ of $\R$, we denote $A^*{=}A{\setminus}\{0\}$ and, for $p$, $q{\in}\N^*$, $p{\leq}q$, 
\begin{equation}
  	I_{[p, q]}\steq{def} \{0\}\cup \{i\ge 1: p{\leq} k_i{\leq} q\},
\end{equation}
with the convention that $\I{p}{=}\I{p,p}$, and $\I{p+}{=}\I{p, {+}\infty}$, so that $I{=}\I{1+}$.

If $x{\in}\R_+^{I^*}$ and $1{\le}p{\le}q$, we define $x_{[p,q]}{=}(x_i, i{\in}\I{p,q}^*)$ and $x$ will also be represented as
$x{=}(x_{[k_1]},x_{[k_2]},\ldots, x_{[k_n]})$ or $x{=}(x_{[1,q-1]},x_{[q+]})$, for $q{\ge}2$, provided that $\I{1,q-1}^*$ and $\I{q+}^*$ are non-empty.
Similarly, if $\pi$ is a probability distribution on on $\R_+^{I^*}$, $\pi^{[p,q]}$ is the distribution on $\R_+^{\I{p,q}^*}$ of marginals of $\pi$ for the coordinates whose index is in $\I{p,q}^*$, i.e. the image of $\pi$ by the mapping $x{\mapsto}x_{[p,q]}$. 

For any subset $A$ of $I$, we  denote by $\Omega(A)$ the set of irreducible $Q$-matrices $(x_{ij},i,j{\in}A)$ for the state space $A$. We will assume in this paper that $R_{\kappa}{\in}\Omega(I)$,  with a slight abuse of notation we will also write $\kappa{\in}\Omega(I)$. An $A{\times}A$-matrix refers to a $|A|{\times}|A|$-matrix, with $|A|$ the cardinality of $A$.
For $i{\in}I^*$,  we define
	\begin{equation}\label{KappaBullet}
		\kappa_{i}^+{=}\kappa_{i0}{+}\sum_{j\in I^*\setminus\{i\}} \kappa_{ij}.  
	\end{equation}
We now introduce a natural distance $(d(i))$  from the origin (the complex $\emptyset$) on the graph of the CRN. 
\begin{definition}\label{definitiond}
We set  $d(0){=}0$ and, for  $1{\leq}i{\leq}n$, 
	\[
 d(i)=\min \left\{k{\ge} 1: \exists i_1,\ldots, i_{k-1}{\in}I^*, \kappa_{0,i_1}{\cdot}\prod_{p=1}^{k-2}\kappa_{i_pi_{p+1}}{\cdot}\kappa_{i_{k-1}i}{>}0\right\}. 
	\]
\end{definition}
A real-valued function $(x(t))$ on $(\R_+)^{I^*}$ is \cadlag if it is right continuous and it has left-limits everywhere on $\R_+^*$, in this case, for $t{>}0$, $x(t{-})$ denotes the left limit of $(x(t))$ at $t{>}0$. If $H$ is a subset of $\R^d$, for $d{\ge}1$, we denote by ${\cal B}(H)$ the set of Borelian subset of $H$, $\cal{C}_c(H)$ the set of continuous functions on $H$ with compact support on $H$ and  $\cal{C}^2_c(H)$ the subset of class ${\cal C}_2$-functions and the set on Borelian probability distributions on $H$ is denoted as ${\cal P}(H)$. 

The paper convergence in distribution of a sequence of jump processes $(U_N(t))$ in $\R^d$ to a process $(U(t))$ is understood with respect to the topology of  uniform convergence on compact sets for \cadlag functions. See Chapters~2 and~3 of~\citet{Billingsley} for example. The convergence in distribution of the associated occupation measures is the convergence in distribution of the sequence of random measures $(\mu_N)$ on $\R_+^d$, defined by, for $f{\in}{\cal C}_c(\R_+^d)$, 
\[
\croc{\mu_N,f}=\int_0^Tf(s,U_N(s))\diff s.
\]
See~\citet{Dawson} for the technical aspects related to measure valued processes. 

\subsection{Stochastic Differential Equations}
We will express the time evolution of the $k$-unary CRN, as a \cadlag process  $(X_N(t)){=}(X_i^N(t),i{=}1,\ldots,n)$,  solution of the following stochastic differential equation (SDE). See~\citet{LR23}. 
For $i{\in} I^*$, $t{\geq}0$, 
\begin{multline}\label{SDE}
	\diff X_i^N(t)= k_i\cal{P}_{0i}\left( (0, \kappa_{0i}N), \diff t\right)+ \sum_{j\in I^*\setminus\{i\}} k_i \cal{P}_{ji}\left(\left( 0, \kappa_{ji}(X_j^N(t{-}))^{(k_j)}\right), \diff t\right)\\
	-\sum_{j\in I\setminus\{i\}} k_i \cal{P}_{ij}\left(\left( 0, \kappa_{ij}(X_i^N(t{-}))^{(k_i)}\right), \diff t\right).
\end{multline}
where $\cal{P}_{ij}$, $i,j\in I$ is a family of independent Poisson point processes on $\R_+^2$ with intensity measure the Lebesgue measure on $\R_+^2$. See~\citet{Kingman}.
If ${\cal P}$ is a positive Borelian measure on $\R_+^2$, and $A{\in}{\cal B}(\R_+)$ is a Borelian subset of $\R_+$, we use the following notation,
\begin{equation}\label{ConvBorel}
	{\cal P}(A, \diff t)= \int_{x{\in}\R_+} \ind{x\in A} {\cal P}(\diff x, \diff t). 
\end{equation}
The martingale, stopping time properties will refer to the smallest filtration $({\cal F}_t)$ satisfying the usual hypotheses and such that 
\[
\left\{\cal{P}_{ij}(A{\times}[0,s]): i{\in}I, j{\in}I\setminus\{i\}, A{\in}{\cal B}(\R_+), s\le t\right\} \subset {\cal F}_t,\quad \forall t{\ge}0. 
\]

\subsection{Invariant Distribution with Product Form Representation}\label{Def0}
In the language of chemical reaction networks, a  $k$-unary CRN is  \emph{weakly reversible} with \emph{one linkage class} and its {\em deficiency} is $0$. See~\citet{Feinberg} for the general definitions for CRNs.

\subsection*{The Deterministic CRN}\ \\ 
In a deterministic setting, a dynamical system $(u_N(t)){=}(u_{i}^N(t))$ on $\R_+^n$ is associated to this CRN
\begin{equation}\label{DetDS}
\frac{\dot{u}_{i}^N(t)}{k_i}=N\kappa_{0i}+\sum_{j\in I^*\setminus\{i\}} \left(u_{j}^N(t)\right)^{k_j}\kappa_{ji}  - (u_{i}^N(t))^{k_i} \sum_{j\in I\setminus\{i\}} \kappa_{ij}, \quad i{\in}I^*.  
\end{equation}
Classical results of~\citet{Feinberg1972} and~\citet{Horn1972} show that, in this case,  $(u_N(t))$ has a unique equilibrium point $\gamma_{N}{=}(N^{1/k_i}u_{i})$ which is locally stable, where $u_\infty{=}(u_{i})$ is the unique positive solution of the system of equations,
\begin{equation}\label{systemLimit}
\kappa_{0i} +\sum_{j\in I^*\setminus\{i\}} u_{j}^{k_j} \kappa_{ji} = u_{i}^{k_i}\sum_{j\in I\setminus\{i\}}\kappa_{ij},\quad  i{\in}I^*. 
\end{equation}
See Proposition~\ref{InvK} of Section~\ref {LAsec} and~\citet{Feinberg} for a general presentation of these dynamical systems.
\subsection*{The invariant Measure}\ \\ 
For  $a{=}(a_i){\in}\N^n$, with $a_i{\in}\{0, \ldots, k_i{-}1\}$ for all $1{\le}i{\le}n$, the Markov process $(X_N(t))$ is irreducible on the set ${\cal S}_a$ defined by Relation~\eqref{StSp}.  \citet{Anderson2010} shows that the invariant distribution of $(X_N(t))$ on ${\cal S}_a$ is given by 
\begin{equation}\label{invMeas}
	\nu_a(x)= \frac{1}{Z_a}\prod_{i=1}^n \frac{(\gamma_{j,N})^{x_i}}{x_i!}, x{\in}{\cal S}_a,
\end{equation} 
where $Z_a$ is the normalization constant,
\[
Z_a= \sum_{k{=}(k_i)\in \N^n} \prod_{i=1}^n \frac{(\gamma_{i,N})^{a_i+p_ik_i}}{(a_i{+}p_ik_i)!}. 
\]
and $\gamma_N{=}(\gamma_{i,N}){=}(N^{1/k_i} u_i)$, where $(u_i)$ is the solution of the system~\eqref{systemLimit}. 

\subsection{Timescales}\label{HiTS}
When $n{=}1$, the $k$-unary CRN is 
\[
	\emptyset \mathrel{\mathop{\xrightleftharpoons[\kappa_{10}]{\kappa_{01} N}}} k S_1,
  \]
        in state $x$, the instantaneous mean drift of $X_N$ is $k(\kappa_{01}N{-}\kappa_{10}x^{(k)})$. In view of Relation~\eqref{Powk}, to have a non-trivial time evolution  when $N$ is large,  this suggests that $x$ should be of the order of $N^{1/k}$. It is not difficult  to show that, provided that the sequence $(X_1^N(0)/N^{1/k})$ converges, then the sequence of processes 
        \[
        \left(\frac{X_1^N(t/N^{1{-}1/k})}{N^{1/k}}\right)
        \]
        is converging in distribution to $(x_1(t))$ the solution of the ODE
        \[
        \dot{x}_1(t)=k(\kappa_{01}{-}\kappa_{10}x_1(t)^k),\quad t{\ge}0.
        \]
        See  Section~\ref{Ehrenfest}. The natural timescale of the process $(X_1^N(t)/N^{1/k})$  is $(t//N^{1{-}1/k})$. If $k{\ge}2$,  $(X_1^N(t)/N^{1/k})$ is  then a {\em fast process}, and when $k{=}1$, $(X_1^N(t)/N)$ can be seen as  a {\em slow process}. 

 For our general $k$-unary CRN, fast and slow processes define a partition of the set of indices $i{\in}\{1,\ldots,n\}$ based on the fact that $k_i{=}1$ or $k_i{\ge}2$, i.e.  $I^*{=}\I{1}^*{\cup}\I{2+}^*$. In the same way,  if $i$, $j{\in}I^*$, is such that $k_i{>}k_j$, then the process $(X_i^N(t)/N^{1/k_i})$ is ``faster'' than the process $(X_j^N(t)/N^{1/k_j})$. This leads to a classification of chemical species according to their natural timescales, i.e. according to the non-decreasing sequence $(k_i)$.   This hierarchy plays an important role in the proofs of convergence in distribution of this paper. 

 \subsection{The Convergence Result}
With the above remark, the set $I_{2+}^*$ is the set of indices of fast processes, the asymptotic evolution of $({X}^N_i(t), i{\in} \I{2+}^*)$ is described only in terms of its {\em occupation measure}. For $I_{1}^*$, the set of indices associated to slow processes,  this is the  convergence in distribution of the sequence of processes $(\overline{X}^N_i(t), i{\in}\I{1}^*)$. 
\begin{definition}
 \begin{enumerate}
\item[]
\item The scaled process $(\overline{X}^N(t))$, is defined for $N{\geq}1$ as 
\begin{equation}\label{ScaledProcess}
	\left(\overline{X}_N(t)\right)=\left(\overline{X}_i^N(t)\right)= \left(\frac{X^N_i(t)}{N^{1/k_i}}\right). 
\end{equation}
The initial state  $X_N(0){=}x_N{=}(x_i^N){\in}\N^n$ of the process $(X_N(t))$ is assumed to satisfy the relation 
\begin{equation}\label{CondIni}
	\lim_{N\rightarrow +\infty}\left(\frac{x_i^N}{N^{1/k_i}}\right)= \left(\alpha_i\right){\in}\left(\R_+^*\right)^n. 
\end{equation}
\item The \emph{occupation measure} $\Lambda_N$ is the random measure on $\R_+{\times}\R_+^{I^*}$
defined by, for $g{\in}\cal{C}_c(\R_+{\times}(\R_+^*)^{I^*})$, 
\begin{equation}\label{OccMeas}
	\croc{\Lambda_N, g} = \int_{\R_+} g\left(u, \left(\overline{X}_i^N(u), i{\in}{I^*}\right)\right)\diff u. 
\end{equation}
        \end{enumerate}
\end{definition}
The main result of the paper is the following  theorem. 
\begin{theorem}\label{Theorem}
If  $(X_N(t))$ is  the solution of SDE~\eqref{SDE} whose initial condition satisfies Condition~\eqref{CondIni},
then,  for the convergence in distribution,
	\begin{equation}
		\lim_{N\rightarrow +\infty} \left( \left(\overline{X}^N_i(t), i{\in}{\I{1}^*}\right), \Lambda_N\right) = \left(\left(x_i(t),i{\in}{\I{1}^*}\right), \Lambda_\infty\right),
	\end{equation}
	where  $(\overline{X}_N(t))$ and occupation measure $\Lambda_N$ are defined respectively by Relations~\eqref{ScaledProcess} and~\eqref{OccMeas}, with, for  $g\in \cal{C}_c\left(\R_+{\times} (\R_+^*)^{I^*}\right)$, 
		\begin{equation}
			\croc{\Lambda_\infty, g} =\int_{\R_{+}} g\left(s, \left(x(s),\left(L_i(x(s)),{i{\in}\I{2+}^*}\right)\right)\right)\diff s,
		\end{equation}
where:
        \begin{enumerate}
        \item If $y{\in}(\R_+^*)^{\I{1}^*}$, $L(y){=}(L_i(y),i{\in}\I{2+}^*)$  is the unique solution of the system
		\begin{equation}\label{LinearSysEll}
			\kappa_{0i}+\sum_{j\in \I{1}^*} y_j\kappa_{ji} +\sum_{j\in \I{2+}^*\setminus\{i\}} L_j(y)^{k_j}\kappa_{ji}  = L_i(y)^{k_i} \sum_{j\in I\setminus\{i\}} \kappa_{ij}, \quad i{\in}\I{2+}^*;
		\end{equation}
         \item The function  $(x(t)){=}(x_i(t), i{\in}{\I{1}^*})$ is the unique solution of the set of ODEs, 
		\begin{multline}\label{ODE}
			\dot{x}_i(t)= \kappa_{0i}+\sum_{j\in \I{1}^*\setminus\{i\}} x_j(t)\kappa_{ji}\\ +\sum_{j\in \I{2+}^*\setminus\{i\}} L_j(x(t))^{k_j}\kappa_{ji}  - x_i(t) \sum_{j\in I\setminus\{i\}} \kappa_{ij}, \quad i{\in} \I{1}^*,
                \end{multline}
                with initial point $(\alpha_i, i{\in}{\I{1}^*})$. 
        \end{enumerate}
\end{theorem}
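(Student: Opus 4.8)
The plan is to follow the three-step program sketched in the introduction: first establish tightness of the slow coordinates and of the occupation measures, then characterize every subsequential limit through a functional equation, and finally use convexity/relative-entropy arguments together with an induction on the hierarchy of timescales to show that this limit is the deterministic object described in the statement. Since the limit will be identified \emph{uniquely} (the system~\eqref{LinearSysEll} has a unique positive solution and the ODE~\eqref{ODE} has a unique solution with the prescribed initial condition), subsequential convergence upgrades to full convergence in distribution.

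\emph{Tightness.} First I would obtain uniform-in-$N$ moment estimates on the scaled coordinates $(\overline{X}_i^N)$. The building block is the one-species analysis of Section~\ref{Ehrenfest} (Proposition~\ref{EhrProp}): a single fast coordinate fed by an input of order $N$ behaves like a generalized $M/M/\infty$ queue whose scaled value has bounded moments, uniformly in $N$. To propagate these bounds through the network I would use the flow-balance/linear-algebra structure of Proposition~\ref{InvK}: summing the drift terms of~\eqref{SDE} over suitable subsets of $I^*$ and using irreducibility of $R_\kappa$, the total input rate into each node is controlled by the external inputs of order $N$, yielding $\sup_N \E\!\big[\int_0^T (\overline{X}_i^N(s))^{p}\,\diff s\big]<\infty$ for every $i{\in}I^*$ and suitable $p$, together with a pathwise bound for the slow coordinates. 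These integrability bounds give tightness of $\Lambda_N$ on $\R_+{\times}\R_+^{I^*}$ (no escape of mass to infinity). For the slow coordinates $i{\in}\I{1}^*$ the jumps are of size $1/N\to 0$ and the bracket of the martingale part of $\overline{X}_i^N$ is of order $1/N$, so the sequence is $C$-tight and any limit is continuous.

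\emph{Functional equation for the limit.} Fix a subsequential limit $\big((x_i(t),i{\in}\I{1}^*),\Lambda_\infty\big)$ and disintegrate $\Lambda_\infty(\diff s,\diff y)=\diff s\,\pi_s(\diff y)$. For a smooth compactly supported test function $f$ of the fast coordinates I would write the Dynkin martingale associated with~\eqref{SDE}. Because $\overline{X}_i^N$ jumps by $\pm k_i/N^{1/k_i}$ while the reaction rates are of order $N$, the generator applied to $f$ produces a leading term of order $N^{1-1/k_i}$ whose coefficient is, to first order,
\[
\kappa_{0i}+\sum_{j\in I^*\setminus\{i\}} (\overline{X}_j^N)^{k_j}\kappa_{ji}-(\overline{X}_i^N)^{k_i}\kappa_i^+ .
\]
Since $f$ is bounded, dividing the martingale identity by $N^{1-1/k_i}$ and letting $N\to\infty$ forces the integral of this coefficient against $\Lambda_\infty$ to vanish. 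This is precisely the functional equation~\eqref{eqLimitMeas} of Proposition~\ref{propEqLimit}: for almost every $s$ the fast marginal of $\pi_s$ is invariant for the conditional fast dynamics, the slow coordinates being frozen at $x(s)$.

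\emph{Concentration and induction on timescales.} The hard part, which I expect to be the main obstacle, is passing from this "invariance" to the explicit \emph{deterministic} description $\pi_s=\delta_{(x(s),L(x(s)))}$; a priori there are many coupled fast timescales $k{=}2,\dots,k_{\max}$, so one cannot simply solve for a single fast equilibrium. The plan is to argue by induction starting from the fastest timescale $k_{\max}$: conditionally on the slower coordinates, the fastest ones form a weakly reversible, deficiency-zero subnetwork whose invariant law has the product Poisson form~\eqref{invMeas} with a large parameter, so the associated relative-entropy (free-energy) function is a strictly convex Lyapunov function with a unique minimizer, namely the solution of the flow-balance equation. Strict convexity forces the scaled fastest coordinates to concentrate at that minimizer, i.e. the corresponding marginal of $\pi_s$ is a Dirac mass. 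Freezing the fastest coordinates at their equilibrium and repeating the argument on the next timescale, the induction collapses $\pi_s$ to a single point, which must solve the full coupled system~\eqref{LinearSysEll}; its unique positive solution is $L(x(s))$. Substituting this concentration into the limit of the drift equation for the slow coordinates then identifies $(x_i(t),i{\in}\I{1}^*)$ as the solution of the ODE~\eqref{ODE} with initial condition $(\alpha_i,i{\in}\I{1}^*)$, and uniqueness for~\eqref{LinearSysEll} and~\eqref{ODE} removes the dependence on the subsequence, giving the stated convergence in distribution.
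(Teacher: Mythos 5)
Your outline reproduces the paper's architecture almost exactly: tightness of the occupation measures plus $C$-tightness of the slow block, the functional equation \eqref{eqLimitMeas} obtained by dividing the Dynkin martingale by $N^{1-1/p}$, identification of the fast marginals by a strictly convex relative-entropy function combined with an induction starting from the fastest timescale, and finally the integrated SDE for the coordinates in $\I{1}^*$ to get the ODE~\eqref{ODE}. Uniqueness of $L$ and of the ODE solution to upgrade subsequential to full convergence is also how the paper concludes.

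There is, however, one genuine gap in the tightness step as you propose it. Moment bounds of the form $\sup_N \E\bigl[\int_0^T (\overline{X}_i^N(s))^{p}\diff s\bigr]{<}\infty$ only prevent escape of mass to infinity; the identification argument needs the much stronger two-sided statement of Theorem~\ref{UnTh}, namely that with probability tending to $1$ the whole trajectory stays in a compact set ${\cal K}_I$ \emph{bounded away from the boundary} of the orthant, uniformly on $[0,T]$. The lower bound is not cosmetic: the entropy test function $z\mapsto \sum_i z_i\ln(z_i/L_i(y)^{p})-z_i$ is not $C^2$ at $z_i{=}0$, so it cannot be inserted into \eqref{eqLimitMeas} unless the limiting measures $\pi_s$ are known to charge only ${\cal K}_I$; and probabilistically a fast coordinate lingering below $k_i$ would shut off the reaction $k_iS_i\rightharpoonup\cdot$ entirely. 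The paper obtains these lower bounds by a separate construction (Proposition~\ref{LemmaMiExist}, with an induction on the graph distance $d$ from $\emptyset$, plus a coupling with minorizing generalized $M/M/\infty$ queues and Corollary~\ref{PropBoundEhr}); your sketch should incorporate something of this kind. A second, smaller omission: in the induction you ``freeze the fastest coordinates at their equilibrium and repeat,'' but to invoke the induction hypothesis one must check that after substituting $\widetilde{L}_1(y)$ the drift of the remaining coordinates is again of the form $\overline{\kappa}^{2}_{0i}+\sum_j\overline{\kappa}^{2}_{ji}y_j^{k_j}-\overline{\kappa}^{2,+}_iy_i^{k_i}$ for a new \emph{irreducible} $Q$-matrix $\overline{\kappa}^{2}$; this is the node-elimination identity \eqref{eqDefKappa0} and it is what makes the hierarchy of timescales genuinely recursive rather than merely plausible.
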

Not that the coordinates of the scaled vector $(\overline{X}_N(t))$  with indices in $\I{1}$ also appear in $\Lambda_N$ even if there is a much stronger result for the convergence in distribution for them. This is only to have simpler expressions.

\section{A Generalized $M/M/\infty$ Queue}\label{Ehrenfest}
In this section, we will study the simplest form of $k$-unary CRN, a CRN with only one species,
\[
	\emptyset \mathrel{\mathop{\xrightleftharpoons[\mu]{\lambda N}}} k S.
\]
The process $(X_N(t))$  is a birth and death process with the transition rates, for $x{\ge}0$,
\begin{equation}\label{QEhr}
x\longrightarrow x{+}
		\begin{cases}
			k&\text{ at rate } \lambda N,\\
			{-}k& \phantom{at } `` \phantom{arate } \mu x^{(k)}. 
\end{cases}
\end{equation}
When $k{=}1$, $(X_N(t))$ is the Markov process of the $M/M/\infty$ queue, with arrival rate $\lambda N$, and departure rate $\mu$. It is a basic model in the study of stochastic chemical reaction networks. See~\citet{LR23} and Chapter~6 of \citet{Robert} for a general presentation.

We start with a simple scaling result. 
\begin{proposition}\label{EhrProp}
	If the initial condition $x_n$ of the Markov process $(X_N(t))$ is such that 
        \[
	\lim_{N\rightarrow +\infty} \frac{x_n}{\sqrt[k]{N}} = \alpha,
        \]
        then, for the convergence in distribution, the relation
	\[
		\lim_{N\rightarrow +\infty} \left( \frac{1}{\sqrt[k]{N}}{X}_N\left(t/N^{1{-}1/k}\right), t\geq 0\right) = \left(x(t), t\geq 0\right),
	\]
holds, where $(x(t))$ is the solution of the ODE $\dot{x}(t){=}\lambda{-}\mu x(t)^k$, with $x(0){=}\alpha$. 
\end{proposition}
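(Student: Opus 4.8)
The plan is to run the classical martingale method for the fluid limit of a density-dependent jump process, the one new difficulty being that the downward rate $\mu x^{(k)}$ is \emph{polynomial} rather than linear, so that the limiting vector field $z\mapsto\lambda-\mu z^k$ is not globally Lipschitz. I would write $(X_N(t))$ as the one-species instance of SDE~\eqref{SDE}, driven by two independent Poisson processes $\cal{P}_{01}$ (births) and $\cal{P}_{10}$ (deaths) on $\R_+^2$ with Lebesgue intensity,
\begin{equation*}
\diff X_N(t)= k\,\cal{P}_{01}\big((0,\lambda N),\diff t\big)-k\,\cal{P}_{10}\big((0,\mu X_N(t-)^{(k)}),\diff t\big).
\end{equation*}
Setting $\theta_N{=}N^{1-1/k}$, integrating this SDE on $[0,t/\theta_N]$, compensating and dividing by $N^{1/k}$ yields, for the scaled process $\overline X_N(t){=}X_N(t/\theta_N)/N^{1/k}$, the semimartingale decomposition
\begin{equation*}
\overline X_N(t)=\overline X_N(0)+\int_0^t k\left(\lambda-\mu\,\frac{X_N(s/\theta_N)^{(k)}}{N}\right)\diff s+M_N(t),
\end{equation*}
with $(M_N(t))$ a local martingale. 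The elementary identity $m^{(k)}{=}m^k{+}O(m^{k-1})$ together with $X_N(s/\theta_N){=}N^{1/k}\overline X_N(s)$ gives the uniform approximation $X_N(s/\theta_N)^{(k)}/N=\overline X_N(s)^k+O(N^{-1/k})$ on every event $\{\sup_{s\le t}\overline X_N(s)\le M\}$, so that the drift integrand converges to $k(\lambda-\mu\overline X_N(s)^k)$.

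First I would show that the martingale is negligible. A direct computation of the jump sizes ($\pm k/N^{1/k}$) and of their rates in the time variable $t$ gives
\begin{equation*}
\croc{M_N}(t)=\frac{k^2}{N^{1/k}}\int_0^t\left(\lambda+\mu\,\frac{X_N(s/\theta_N)^{(k)}}{N}\right)\diff s,
\end{equation*}
which is $O(N^{-1/k})$ on the bounded event above; by Doob's inequality $(M_N(t))$ then converges to $0$ in $L^2$, uniformly on compact time intervals.

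The \textbf{main obstacle} is to control the nonlinear term and to rule out blow-up, since $z^k$ is not globally Lipschitz. I would localise with the stopping time $\tau_M{=}\inf\{t:\overline X_N(t)\ge M\}$. On $[0,\tau_M]$ the drift is bounded by $k(\lambda{+}\mu M^k)$, the jumps are $O(N^{-1/k})$ and $\croc{M_N}{\to}0$, which yields $C$-tightness of the stopped processes (e.g.\ by the Billingsley/Aldous criterion). The restoring nature of the drift then removes the localisation: since $k(\lambda-\mu z^k)<0$ for $z>(\lambda/\mu)^{1/k}$, a Lyapunov/Gronwall estimate on $\E[\overline X_N(t\wedge\tau_M)]$ (or on a second moment) bounds the process uniformly in $N$ and shows that $\P(\tau_M\le T)\to0$ as $M\to\infty$, uniformly in $N$. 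Together these give tightness of $(\overline X_N)$ in the space of \cadlag functions for the topology of uniform convergence on compact sets.

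Finally I would identify the limit. Passing to the limit in the semimartingale decomposition, using the previous two steps and the uniform drift approximation, any accumulation point $(x(t))$ satisfies the integral equation
\begin{equation*}
x(t)=\alpha+\int_0^t k\big(\lambda-\mu\,x(s)^k\big)\diff s,
\end{equation*}
i.e.\ the integral form of the ODE of the statement (cf.\ Section~\ref{HiTS}). Because $z\mapsto\lambda-\mu z^k$ is locally Lipschitz and the confinement bound keeps the trajectory in a fixed compact set on $[0,T]$, this equation has a unique solution; hence the whole sequence $(\overline X_N)$ converges in distribution to $(x(t))$, as claimed. The case $k{=}1$ is the standard linear $M/M/\infty$ fluid limit, for which the confinement step is not needed.
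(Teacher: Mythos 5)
Your proposal is correct and follows essentially the same route as the paper: the semimartingale decomposition of the rescaled SDE, the $O(N^{-1/k})$ bound on $\croc{M_N}$ killed by Doob's inequality, tightness via the modulus-of-continuity criterion, and identification of the limit with the (locally Lipschitz) ODE. The only difference is cosmetic: the paper dispenses with your stopping-time localisation by noting that the death term is nonpositive, so $\overline X_N(t)\le \overline X_N(0)+M_N(t)+\lambda k t$ directly bounds $\sup_{t\le T}\overline X_N(t)$ and, after taking expectations, also bounds $\E\int_0^t X_N(s)^{(k)}/N\,\diff s$, which is all that is needed.
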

\begin{proof}
This is done with straightforward stochastic calculus. The SDE~\eqref{SDE} is in this case 
  \begin{equation}\label{Eeqd}
  \diff X_N(t)=k{\cal P}_{01}((0,\lambda N),\diff t){-}k{\cal P}_{10}((0,\mu X_N(t{-})^{(k)}),\diff t),
  \end{equation}
  by integrating this relation, we obtain that, for $t{\ge}0$, 
  \begin{equation}\label{Eeq}
  Y_N(t)\steq{def} \frac{1}{\sqrt[k]{N}}{X}_N\left(t/N^{1{-}1/k}\right)
  = Y_N(0){+}M_N(t){+}\lambda k t{-}k\int_0^t \frac{X_N(s)^{(k)}}{N}\diff s,
  \end{equation}
  where $(M_N(t))$ is a martingale whose  previsible increasing process is given by
  \[
  \croc{M_N}(t)=\frac{\lambda k^2 t}{N^{1{+}1/k}}{+}\frac{\mu k^2}{N^{1/k}}\int_0^t \frac{X_N(s)^{(k)}}{N}\diff s,
  \]
  therefore, with Relation~\eqref{Eeq} we get
  \[
  \E\left(\croc{M_N}(t)\right)\le \frac{\lambda k^2 t}{N^{1{+}1/k}}{+}\frac{\mu k}{N^{1/k}}\left(Y_N(0){+}\lambda k t\right).
  \]
  Doob's Inequality gives that the sequence of martingales  $(M_N(t))$ is converging in distribution to~$0$. By using again  Relation~\eqref{Eeq}, we get that,  for any $T{>}0$ and $\eps{>}0$, there exists $K$ such that
  \[
  \P\left(\sup_{t\le T}Y_N(t)\geq K\right)\le \eps.
  \]
 We can then use the criterion of the modulus of continuity, see Theorem~7.3 of~\citet{Billingsley}, to show that the sequence $(Y_N(t))$ is tight for the convergence in distribution. It is then easy to conclude the proof of the proposition. 
\end{proof}
When $k{=}1$, this is the classical result for the scaled $M/M/\infty$ queue that, for the convergence in distribution
\[
\lim_{N\rightarrow +\infty} \left( \frac{{X}_N(t)}{N}\right) = \left(\frac{\lambda}{\mu} + \left(\alpha{-}\frac{\lambda}{\mu}\right)e^{-\mu t}\right). 
\]
See Theorem 6.13 in \citet{Robert}.

The case $k{\ge}2$ is in fact more interesting, and more important for our study.  With Definition~\eqref{ScaledProcess}, the above proposition gives the asymptotic behavior of the process $(\overline{X}^N(t/N^{1-1/k}))$, i.e. on a slower timescale than the timescale  $(t)$ of interest in our paper. It is quite clear that $(\overline{X}_N(t))$ should be  close to the equilibrium of the function $(x(t))$, i.e.  close to $\ell_\infty{=}\sqrt[k]{\lambda/\mu}$. 

For such a process on a fast timescale, a convergence result of  $(\overline{X}^N(t))$ to $(\ell_\infty)$ is  classically formulated in terms of the convergence in distribution of its \emph{occupation measure}. See Section~\ref{Not}. Here, however,  a stronger result of convergence is a key ingredient in the proofs of tightness  for the convergence results of this paper. 
\begin{proposition}\label{PropConvEhr}
If $k{\ge}2$ and  $X_N(0){=}O(\sqrt[k]{N})$, then for any $0{<}\eta{<}T$,  and $\eps{>}0$, 
	\[
		\lim_{N\rightarrow +\infty} \P\left(\sup_{\eta\leq t{\leq}T} \left|\frac{X^N(t)}{\sqrt[k]{N}}{-}\ell_\infty \right|>\eps \right)=0
	        \]
holds  with $\ell_\infty{\steq{def}}\sqrt[k]{\lambda/\mu}$. 
\end{proposition}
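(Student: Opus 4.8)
The plan is to upgrade the distributional convergence of Proposition~\ref{EhrProp}, which operates on the slow timescale $(t/N^{1-1/k})$, into a uniform-in-time concentration statement on the fast natural timescale $(t)$, away from the initial instant. The key idea is that on the fast timescale the process $(\overline{X}^N(t)){=}(X^N(t)/\sqrt[k]{N})$ sees a drift that is of order $N^{1-1/k}$, so it is pulled toward the equilibrium $\ell_\infty{=}\sqrt[k]{\lambda/\mu}$ extremely quickly, and the martingale fluctuations are correspondingly small. First I would write the SDE~\eqref{Eeqd} on the timescale $(t)$ and integrate to obtain, for $t{\ge}0$,
\begin{equation*}
\overline{X}^N(t)=\overline{X}^N(0)+\widetilde{M}_N(t)+\lambda k N^{1-1/k}t-k N^{1-1/k}\int_0^t \frac{X^N(s)^{(k)}}{N}\diff s,
\end{equation*}
where $(\widetilde{M}_N(t))$ is the associated martingale. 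The crucial point is that $X^N(s)^{(k)}/N$ is close to $(\overline{X}^N(s))^k$ up to lower-order corrections from the falling factorial~\eqref{Powk}, so the bracketed drift behaves like $kN^{1-1/k}(\lambda{-}\mu(\overline{X}^N(s))^k)$, which vanishes precisely at $\ell_\infty$ and is strongly restoring.

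The main technical step is a Lyapunov/relative-entropy argument to control the deviation. I would introduce a convex function $V$ with a unique minimum at $\ell_\infty$ — a natural choice given the Poisson product-form invariant measure~\eqref{invMeas} is the relative entropy $V(y)=y\log(y/\ell_\infty)-y+\ell_\infty$, or more simply a quadratic $V(y)=(y-\ell_\infty)^2$ localized on a compact set — and compute the action of the generator on $V(\overline{X}^N(\cdot))$. Because of the factor $N^{1-1/k}$ in the drift, the generator applied to $V$ satisfies a bound of the form $\mathcal{A}_N V(y)\le -c N^{1-1/k}V(y)+C N^{1-1/k-1/k}$ on a compact neighborhood of $\ell_\infty$, for constants $c,C{>}0$. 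Integrating the corresponding supermartingale relation and using Doob's inequality on the remaining martingale term (whose increasing process carries the prefactor $N^{1-1/k}$ against a compensating $N^{-1/k}$, hence is small as in the estimate of Proposition~\ref{EhrProp}) yields exponentially fast decay of $\E(V(\overline{X}^N(t)))$ on any interval $[\eta,T]$, uniformly in $t$, once the process has entered the neighborhood.

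Before this Lyapunov estimate can be deployed, I must handle two preliminary issues. First, entry into the compact neighborhood: starting from $X^N(0)=O(\sqrt[k]{N})$, i.e. $\overline{X}^N(0)$ bounded, I would show that within time $\eta$ the process reaches a neighborhood of $\ell_\infty$ with high probability — this follows from the same strong drift, possibly by a first-passage argument or by applying Proposition~\ref{EhrProp} after a further time change. Second, I must prevent excursions to large values, which I would rule out using the tightness bound $\P(\sup_{t\le T}\overline{X}^N(t)\ge K)\le\eps$ already established in the proof of Proposition~\ref{EhrProp}, allowing me to work on the event $\{\overline{X}^N \text{ stays bounded by } K\}$ where all the drift and generator estimates hold with uniform constants. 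Combining the entry estimate, the confinement, and the exponential contraction gives the claimed uniform convergence on $[\eta,T]$.

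The hard part will be making the Lyapunov contraction genuinely uniform over the whole interval $[\eta,T]$ rather than pointwise in $t$: a naive integration controls $\E(V(\overline{X}^N(t)))$ for each fixed $t$, but the supremum over $t$ requires a maximal inequality. I expect to obtain this by writing $V(\overline{X}^N(t))$ plus its compensator as a supermartingale and applying a maximal inequality for nonnegative supermartingales, together with the smallness of the martingale bracket; the interplay between the large drift prefactor $N^{1-1/k}$ and the small noise prefactor $N^{-1/k}$ is exactly what makes both the decay and the fluctuation control work, and getting the constants to line up on the event where the process is confined to a compact set is the delicate bookkeeping.
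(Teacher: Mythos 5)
There is a genuine gap, and it sits exactly at the step you flag as ``the hard part.'' Your second--moment Lyapunov estimate does give $\E\bigl(V(\overline{X}^N(t))\bigr)=O(N^{-1/k})$ for each fixed $t\ge\eta$ (the balance $-cN^{1-1/k}V+CN^{1-2/k}$ is correct), but the tools you propose to upgrade this to a supremum over $[\eta,T]$ do not close. On the fast timescale the predictable bracket of the martingale part of $(\overline{X}^N(t))$ over an interval of length $O(1)$ is of order $N\cdot N^{-2/k}=N^{1-2/k}$: this is $O(1)$ for $k=2$ and \emph{diverges} for $k\ge 3$, so the assertion that the bracket ``is small as in the estimate of Proposition~\ref{EhrProp}'' conflates the two timescales (in Proposition~\ref{EhrProp} the bracket is $O(N^{-1/k})$ only because the time horizon there is $T/N^{1-1/k}$). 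Concretely: Doob's $L^2$ inequality applied to the martingale in the evolution of $V$ gives a bound of order $N^{1-3/k}$, useless for $k\ge 3$; a union bound over the $\sim TN^{1-1/k}$ relaxation windows against the pointwise Chebyshev bound $N^{-1/k}/\eps^2$ gives $\sim TN^{1-2/k}/\eps^2$, which does not vanish for any $k\ge2$; and the natural nonnegative supermartingale one would feed into a maximal inequality acquires the additive drift term $CN^{1-2/k}(T-\eta)$, which blows up. What is actually true is that the bracket accumulated over \emph{one} relaxation time $N^{-(1-1/k)}$ is $O(N^{-1/k})$, and converting that into a uniform statement over polynomially many such windows requires an exponential-moment (Chernoff/Freidlin--Wentzell type) estimate, not an $L^2$ one. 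A secondary issue: the confinement bound $\P(\sup\overline{X}^N\ge K)\le\eps$ from the proof of Proposition~\ref{EhrProp} is only valid on the real-time interval $[0,T/N^{1-1/k}]$, so it cannot be invoked as stated to localize the process on all of $[0,T]$.

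The paper avoids all of this with a coupling rather than a Lyapunov function. After the entry estimate (which you handle essentially as the paper does), it dominates $X_N(t)$ by $y_N+kZ(Nt)$ where $(Z(t))$ is a stable $M/M/1$ queue with input rate $\lambda$ and service rate $\mu(\ell_1)^k>\lambda$ started at $0$; the event $\sup_{t\le T}\overline{X}^N(t)\ge\ell_2$ then forces $Z$ to hit level $\eps\sqrt[k]{N}$ before time $NT$, and Proposition~5.11 of \citet{Robert} shows this hitting time is of order $\rho^{-\sqrt[k]{N}}$, exponentially large in $N^{1/k}$, which beats the polynomial horizon $NT$. This is precisely the exponential estimate your approach is missing, obtained by comparison rather than by a maximal inequality. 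If you want to salvage your route you would need to replace the quadratic moment bound by an exponential supermartingale bound on excursions of $V$ above level $\eps^2$; as written, the argument does not prove the proposition.
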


\begin{proof}
The proof is carried out in two (similar)  steps:  with a stochastic upper bound of $\overline{X}^N(t){-}\ell_\infty$, and then, with a  stochastic lower bound of $\ell_\infty{-}\overline{X}^N(t)$. 

First, we show that the process reaches the neighborhood of $\ell_\infty$ before time $\eta{>}0$ with high probability. 
Let $\ell_1{>}\ell_\infty$, define
\[
S_N\steq{def}\inf\left\{t{\geq} 0: X_N(t)^{(k)}\leq (\ell_1)^kN\right\},
\] 
The integration of Relation~\eqref{Eeqd} gives 
\begin{align*}
	\E\left( X^N(\eta{\wedge}S_N)\right) &=x_N +k\E\left(\int_0^{\eta\wedge S_N} (\lambda N-\mu (X^N(u))^{(k)}) \diff u\right)\\
		&\leq C_0\sqrt[k]{N} +k\mu \left((\ell_\infty)^k{-}(\ell_1)^k\right)N\E(\eta{\wedge}\tau_N),
\end{align*}
for some constant $C_0$. Therefore we have for $N$ large enough, 
\[
	\E\left(\eta{\wedge}S_N\right)\leq \frac{C_0}{k\mu((\ell_1)^k{-}(\ell_\infty)^k)}N^{1/k-1}, 
\]
and therefore that $(\P(S_N{>}\eta))$ converges to $0$. 

With the strong Markov property of $(X_N(t))$, we can therefore assume that
\[
X_N(0)\leq y_N\steq{def}\ell_1 \sqrt[k]{N}{+}k{-}1.
\]
Let $(Z(t))$ be a birth and death process on $\N$ starting at $0$, with the transitions 
\[	
	x\to x{+}
	\begin{cases}
		{+}1 \quad \lambda,\\
		{-}1 \quad \mu (\ell_1)^k\text{ if } x{\geq} 1.
	\end{cases}
        \]
The process $(Z(t))$ is the process of the number of jobs of an $M/M/1$ queue with input rate $\lambda$ and service rate $\mu(\ell_1)^k$. See Chapter~5 of~\citet{Robert}. 
Since $\mu(\ell_1)^k{>}\lambda$, this process is positive recurrent.

We now construct a coupling of $(X_N(t))$ and $(Z(t))$ such that the relation 
\begin{equation}\label{couplingRelXZ}
	X_N(t)\leq y_N{+}kZ(Nt), \quad \forall t{\ge}0, 
\end{equation}
holds,  where  $(X_N(t))$ is  the solution of the SDE~\eqref{Eeqd} with initial point $x_N{\le}y_N$ and $(Z(t))$ is the solution of the SDE 
\[
	\diff Z(t)= \cal{P}_{01}\left((0, \lambda N), \frac{\diff t}{N}\right) -\ind{Z(t{-})>0}\cal{P}_{10}\left((0, \mu (\ell_1)^k N), \frac{\diff t}{N}\right)
        \]
with initial point at $0$. 

It is enough to prove Relation~\eqref{couplingRelXZ} by induction on the instants of jumps of the process $(X_N(t),Z(Nt))$ in the following way:  if the inequality holds at time $t_0$, then it also holds  at the instant of the next jump of the process $(X_N(t),Z(Nt))$ after time $t_0$.

Without loss of generality, we can assume that $t_0{=}0$ and $X_N(0){\le}y_N{+}kZ(0)$,  $t_1$ is the first instant of jump of $(X_N(t),Z(Nt))$.   Since both processes $(X^N(t))$ and $(kZ(Nt))$ have the same positive jump sizes at the same instants, we have only to consider jumps with negative sizes.

\begin{enumerate}
 \item If $X_N(0){\ge} y_N$, then $X_N(0)^{(k)}{\ge}(\ell_1)^k$. If at time $t_1$, there is a jump for $(Z(Nt))$ whose  size  is ${-}k$, it is due to the Poisson process $\cal{P}_{10}$. In view of the SDE for $(X_N(t))$, this implies that  there is also a jump ${-}k$ for $(X_N(t))$ at time $t_1$. Relation~\eqref{couplingRelXZ} will then also hold at the first instant of jump of $(X_N(t),Z(Nt))$. 

\item If $X_N(0){<} y_N$, if there is a negative jump of $(Z_N(Nt))$ at time $t_1$, Relation~\eqref{couplingRelXZ} will obviously hold at that instant. 
\end{enumerate}
All the other possibilities preserve clearly the desired inequality. 

Now, for $\ell_2$ such that  $\ell_2{>}\ell_1$, 
\[
\P\left(\sup_{0\leq t\leq T}\frac{X^N(t)}{\sqrt[k]{N}}\ge \ell_2  \right)\leq \P\left(\sup_{0\leq t\leq T} Z(Nt) \ge \frac{(\ell_2{-}\ell_1)\sqrt[k]{N}}{k} {-}1\right).
\]
If, for $0{<}\eps{<}\ell_2{-}\ell_1$,
\[
\tau_N\steq{def} \inf\{t\geq 0: Z(t)\geq \eps \sqrt[k]{N}\},
\]
with the last inequality,  we have therefore, for $N$ sufficiently large, 
\begin{equation}\label{Ehreq}
\P\left(\sup_{0\leq t\leq T}\frac{X^N(t)}{\sqrt[k]{N}}\ge \ell_2  \right)\leq \P(\tau_N\leq N T).
\end{equation}
Proposition 5.11 of~\citet{Robert} on the hitting times of a positive recurrent $M/M/1$ queue  gives that there exists $\rho{\in}(0,1)$ such that 
the sequence $(\rho^{\sqrt[k]{N}} \tau_N)$ converges in distribution to an exponentially distributed random variable. In particular 
\[
\limsup_{N\to+\infty}\P(\tau_N \leq  T N)=\limsup_{N\to+\infty}\P\left(\rho^{\sqrt[k]{N}}\tau_N\leq N\rho^{\sqrt[k]{N}}T\right)=0.
\]
Since $\ell_2$ is arbitrarily close to $\ell_\infty$, Relation~\ref{Ehreq} gives the relation for the upper bound. The other case uses the same ingredients. The proposition is proved. 
\end{proof}
With the same type of arguments, we can obtain the following corollary. 
\begin{corollary}\label{PropBoundEhr}
  For $k{\ge}2$, if the initial condition of  $(X_N(t))$ satisfies the relation
        \[
	\lim_{N\rightarrow +\infty} \frac{X_N(0)}{\sqrt[k]{N}} = \alpha > 0,
        \]
 and,  for $M{>}m{>}0$ such that $\alpha^k$, $\lambda/\mu{\in}(m,M)$.
	then
	\[
		\lim_{N\rightarrow +\infty}\P\left( \frac{X^N(s)^{(k)}}{N}\in(m, M), \forall s{\in}[0,T]\right)=1. 
	\]	
\end{corollary}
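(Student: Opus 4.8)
The plan is to establish the statement through two one-sided stochastic comparisons with positive recurrent $M/M/1$ queues, in the spirit of the proof of Proposition~\ref{PropConvEhr}. The essential difference with that proposition is that there one first waits, via the stopping time $S_N$, until the process has reached a neighborhood of $\ell_\infty$, so only the trajectory on $[\eta,T]$ is controlled, whereas here I need bounds valid on all of $[0,T]$, including near $0$. I expect this to be easier rather than harder: since $\alpha^k$ and $\lambda/\mu{=}\ell_\infty^k$ both lie in the open interval $(m,M)$, the scaled process starts inside the target window and is only pushed towards $\ell_\infty$, which is again inside the window, so no initial relaxation phase has to be discarded, and the comparison queues can be started directly from the initial condition.

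First I would fix the levels for the upper bound. Since $\max(\alpha,\ell_\infty){<}\sqrt[k]{M}$, choose $\ell_1$, $\ell_2$ with $\max(\alpha,\ell_\infty){<}\ell_1{<}\ell_2{<}\sqrt[k]{M}$. The condition $\ell_1{>}\ell_\infty$ gives $\mu\ell_1^k{>}\lambda$, so the $M/M/1$ queue $(Z(t))$ with arrival rate $\lambda$ and service rate $\mu\ell_1^k$ is positive recurrent, while $\ell_1{>}\alpha$ gives $x_N{\le}y_N\steq{def}\ell_1\sqrt[k]{N}{+}k{-}1$ for $N$ large. The coupling $X_N(t){\le}y_N{+}kZ(Nt)$ of Relation~\eqref{couplingRelXZ} then holds for all $t{\ge}0$, by exactly the same induction on jumps. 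Consequently $\sup_{0\le t\le T}\overline{X}^N(t){\ge}\ell_2$ forces $\sup_{0\le t\le T}Z(Nt)$ to exceed a quantity of order $\sqrt[k]{N}$, an event of vanishing probability by the hitting-time estimate for the positive recurrent $M/M/1$ queue (Proposition~5.11 of~\citet{Robert}), as in Relation~\eqref{Ehreq}. Since $X^{(k)}{\le}X^k$, on the complementary event $X^N(s)^{(k)}/N{<}\ell_2^k{<}M$ for all $s{\in}[0,T]$.

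For the lower bound I would run a symmetric coupling from below. As $\min(\alpha,\ell_\infty){>}\sqrt[k]{m}$, choose $\sqrt[k]{m}{<}\ell_2'{<}\ell_1'{<}\min(\alpha,\ell_\infty)$ and set $y_N'\steq{def}\ell_1'\sqrt[k]{N}{-}k$. Let $(Z'(t))$ be the queue whose arrivals are driven by $\cal{P}_{10}$ at rate $\mu(\ell_1')^kN$ and whose departures are driven by $\cal{P}_{01}$ at rate $\lambda N$ on the fast clock $Nt$; since $\ell_1'{<}\ell_\infty$ gives $\mu(\ell_1')^k{<}\lambda$, this queue is again positive recurrent. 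I claim $X_N(t){\ge}y_N'{-}kZ'(Nt)$ for all $t{\ge}0$, checked by induction on jumps: a birth of $X_N$ is matched with a departure of $Z'$ and preserves the bound; at a death of $X_N$ with $X_N^{(k)}{\le}(\ell_1')^kN$ the death point lies in the arrival strip of $Z'$, so $Z'$ increases simultaneously and the bound is preserved; and at a death with $X_N^{(k)}{>}(\ell_1')^kN$ one has $X_N^k{\ge}X_N^{(k)}{>}(\ell_1')^kN$, hence $X_N{-}k{>}\ell_1'\sqrt[k]{N}{-}k{=}y_N'$ and the bound holds trivially. Positive recurrence then makes $\inf_{0\le t\le T}\overline{X}^N(t){\le}\ell_2'$ of vanishing probability, and since $X^N(s)^{(k)}{\ge}(X^N(s){-}k{+}1)^k$, on the complementary event $X^N(s)^{(k)}/N{>}m$ for all $s{\in}[0,T]$ once $N$ is large.

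Combining the two estimates gives the statement, since the intersection of the two high-probability events keeps $X^N(s)^{(k)}/N$ in $(m,M)$ throughout $[0,T]$. The one step that requires genuine care is the induction for the lower coupling: unlike the upper bound, the death intensity $\mu X_N^{(k)}$ of $X_N$ can exceed the arrival intensity $\mu(\ell_1')^kN$ of $Z'$ when $X_N$ is large, so a death of $X_N$ need not be accompanied by an arrival of $Z'$. I expect the main obstacle to be verifying that in exactly that regime $X_N$ sits far enough above $y_N'$ that dropping an unmatched jump cannot break the inequality; this is precisely what the choice $y_N'{=}\ell_1'\sqrt[k]{N}{-}k$ together with the elementary bound $X_N^k{\ge}X_N^{(k)}$ is designed to guarantee.
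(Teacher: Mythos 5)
Your proof is correct and takes essentially the same route the paper intends: the paper gives no separate proof of this corollary (it is stated as following ``with the same type of arguments'' as Proposition~\ref{PropConvEhr}), and you correctly instantiate those arguments, namely the two one-sided couplings with positive recurrent $M/M/1$ queues and the hitting-time estimate of Proposition~5.11 of~\citet{Robert}. Your key observation --- that the stopping-time phase $S_N$ is unnecessary here because $\alpha^k$ and $\lambda/\mu$ both lie in $(m,M)$, so the comparison queues can be started at time $0$ and the bound holds on all of $[0,T]$ --- is exactly the point that turns the proposition's argument into the corollary, and your verification of the lower-bound coupling induction is sound.
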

\section{Uniform Estimates}\label{Bounds}
This section is devoted to the proof of the fact that for any $T{>}0$, with high probability,  the scaled process $(\overline{X}_N(t))$ of Relation~\eqref{ScaledProcess} lives in a bounded domain of the interior of $\R_+^n$ uniformly on the time interval  $[0,T]$. Recall that since the components with index $i$ such that $k_i{\ge}2$ are on ``fast'' timescales, see Section~\ref{HiTS}, uniform estimates on a time interval are more challenging to establish.

\begin{theorem}\label{UnTh}
If $(X_N(t))$ is the Markov process associated to  the $k$-unary CRN of Definition~\ref{kDef}  whose matrix $R_{\kappa}$ is irreducible and with initial conditions satisfying Relation~\eqref{CondIni}, then for any $T{>}0$,  there exist two positive vectors $(m_i)$ and $(M_i)$ of $\R_+^n$ such that
\[
\lim_{N\to+\infty}\P\left({\cal E}_N\right)= \lim_{N\to+\infty}\P\left(\frac{X_i^N(t)^{(k_i)}}{N}\in(m_i,M_i), \forall i{\in}\{1,\ldots,n\}, \forall t{\le}T\right)=1,
\]
with, for $J{\subset}I$,
\begin{equation}\label{EN}
  \begin{cases}
    {\cal K}_J=&\displaystyle \left\{x{=}(x_i){\in}\left(\R_+\right)^{I^*}: \sqrt[k_i]{m_i}< (x_i)^{k_i}< \sqrt[k_i]{M_i}, \forall i{\in}J^*\right\}\\
    {\cal E}_N\steq{def}&\displaystyle \left\{ \overline{X}_N(t)\in {\cal K}_I, \forall t{\le}T\right\}.
  \end{cases}
\end{equation}
\end{theorem}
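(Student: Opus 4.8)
The plan is to reduce the multidimensional estimate to the one-species results of Section~\ref{Ehrenfest} by a monotone coupling, freezing that coupling with a single stopping time that controls all coordinates at once. I would set, for $i{\in}I^*$, $Y_i^N(t){=}X_i^N(t)^{(k_i)}$ and introduce
\[
\tau_N=\inf\left\{t\ge0:\exists\,i{\in}I^*,\ Y_i^N(t)/N\notin(m_i,M_i)\right\},
\]
the first exit time of the vector of scaled powers from the product region $\prod_i(m_i,M_i)$, for constants $0{<}m_i{<}M_i$ to be chosen. Since ${\cal E}_N\supseteq\{\tau_N{>}T\}$ up to the negligible overshoot at $\tau_N$, it is enough to prove $\P(\tau_N{\le}T)\to0$.

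For the choice of $(m_i),(M_i)$ — the linear algebra step — I would write the powers $U_i{=}u_i^{k_i}$, so that the equilibrium system~\eqref{systemLimit} becomes the \emph{linear} system $\kappa_{0i}{+}\sum_{j\in I^*\setminus\{i\}}\kappa_{ji}U_j{=}\kappa_i^+U_i$, $i{\in}I^*$, i.e. the balance relations of the invariant measure of the irreducible $Q$-matrix $R_{\kappa}$ normalized by $U_0{=}1$ (cf. Proposition~\ref{InvK}). Its matrix is a nonsingular M-matrix, so by irreducibility there exist strictly positive vectors that are respectively a strict super-solution and a strict sub-solution, $\kappa_{0i}{+}\sum_{j\in I^*\setminus\{i\}}\kappa_{ji}M_j{<}\kappa_i^+M_i$ and $\kappa_{0i}{+}\sum_{j\in I^*\setminus\{i\}}\kappa_{ji}m_j{>}\kappa_i^+m_i$ for every $i{\in}I^*$. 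These cone conditions are preserved when $(M_i)$ is multiplied by any $\lambda{\ge}1$ and $(m_i)$ by any $\mu{\le}1$, so after rescaling I may also arrange $m_i{<}\alpha_i^{k_i}{<}M_i$ for all $i$, with $(\alpha_i)$ the initial limit of~\eqref{CondIni}.

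Next comes the coupling. On $\{t{<}\tau_N\}$ every coordinate obeys $m_jN{<}Y_j^N(t){<}M_jN$, hence the birth rate of $X_i^N$, namely $\kappa_{0i}N{+}\sum_{j\in I^*\setminus\{i\}}\kappa_{ji}Y_j^N(t)$, is bracketed between $b_iN$ and $a_iN$ with $b_i{=}\kappa_{0i}{+}\sum_j\kappa_{ji}m_j$ and $a_i{=}\kappa_{0i}{+}\sum_j\kappa_{ji}M_j$, while its death rate is exactly $\kappa_i^+X_i^N{}^{(k_i)}$. Driving the SDE~\eqref{SDE} and two auxiliary one-species CRNs $\widetilde X_i^{\pm}$ — with constant inputs $a_iN$, resp. $b_iN$, death rate $\kappa_i^+\cdot^{(k_i)}$, and $\widetilde X_i^{\pm}(0){=}X_i^N(0)$ — by common Poisson processes, the monotonicity of $x{\mapsto}x^{(k_i)}$ gives the pathwise ordering $\widetilde X_i^-(t)\le X_i^N(t)\le\widetilde X_i^+(t)$ for $t{<}\tau_N$. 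Each $\widetilde X_i^{\pm}$ is a generalized $M/M/\infty$ queue whose equilibrium power is $a_i/\kappa_i^+{<}M_i$, resp. $b_i/\kappa_i^+{>}m_i$, by the super/sub-solution inequalities; Corollary~\ref{PropBoundEhr} (for $k_i{\ge}2$) together with the classical law of large numbers for the $M/M/\infty$ queue (for $k_i{=}1$) then yields, for each $i$ with probability tending to $1$, that $\widetilde X_i^{\pm}(t)^{(k_i)}/N$ stays on $[0,T]$ in a compact subinterval of $(m_i,M_i)$ at distance of order one from its endpoints.

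Finally I would close the argument and isolate the obstacle. On the intersection $G_N$ of these $2|I^*|$ high-probability events, for $t{<}\tau_N$ one has $(m_i{+}c)N\le Y_i^N(t)\le(M_i{-}c)N$ for some fixed $c{>}0$; since a single jump of $Y_i^N$ has size $O(N^{1-1/k_i}){=}o(N)$, the coordinate realizing the exit at $\tau_N$ cannot cross a margin of order $N$ in one step, contradicting the definition of $\tau_N$. Hence $\{\tau_N{\le}T\}\cap G_N{=}\emptyset$ and $\P(\tau_N{\le}T)\le\P(G_N^c)\to0$, proving the theorem. I expect the main obstacle to be the circular dependence of the coordinatewise bounds — controlling $X_i^N$ requires control of its neighbours and conversely — which is exactly what the single simultaneous stopping time $\tau_N$ disentangles; the secondary technical point is to make the interacting, state-dependent monotone coupling rigorous, handled through the common Poisson representation of~\eqref{SDE} and the fact that all death rates share the increasing function $x{\mapsto}x^{(k_i)}$.
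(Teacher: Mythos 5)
Your proposal is correct and follows essentially the same route as the paper: reduction to a first exit time, choice of $(m_i)$ and $(M_i)$ as strict sub/super-solutions of the linear balance system of Proposition~\ref{InvK}, a pathwise monotone coupling with frozen-rate one-species generalized $M/M/\infty$ queues driven by the same Poisson processes, and the single-species estimates of Section~\ref{Ehrenfest}. The only (harmless) deviations are that you take a full-sum strict sub-solution for $(m_i)$ where the paper constructs one inductively on the distance $d$ with the restricted sum over $\{j: d(j){<}d(i)\}$, and that you handle the $k_i{=}1$ coordinates through the same coupling plus the classical $M/M/\infty$ fluid limit, where the paper first bounds the coordinates in $\I{1}^*$ by elementary arguments and then reduces to the all-fast case of Proposition~\ref{BEprop}.
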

The important implication of this result is that, on the set ${\cal E}_N$,  \emph{every reaction} has a rate of the order of $N$. Note that because of the factorial term in the expression of the rate of the reactions, the event ${\cal E}_N$ is not the event 
\[
\left\{\frac{X_i^N(t)^{(k_i)}}{N}\in(m_i,M_i), \forall i{\in}\{1,\ldots,n\}, \forall t{\le}T\right\},
\]
however, when $N$ goes to infinity, both events have the same probability. 

 The proof of the theorem is done by considering the stopping time $H_N{\wedge}T_N$, where
\begin{equation}\label{HNTN}
        \begin{cases}
\displaystyle                H_N\steq{def} \inf\left\{ t\geq 0: \min_{i\in I^*}\frac{(X^N_i(t))^{(k_i)}}{m_i N} \leq 1 \right\},\\
\displaystyle                T_N\steq{def} \inf\left\{ t\geq 0:  \max_{i\in I^*}\frac{(X^N_i(t))^{(k_i)}}{M_i N} \geq 1\right\},
        \end{cases}     
\end{equation}
and prove that for any $T{>}0$, the sequence $(\P(H_N{\wedge}T_N{\le}T))$ converges to $0$.

The proof is done in several steps.  Results on convenient vectors $(m_i)$ and $(M_i)$ are established in Section~\ref{LAsec}. 
Proposition~\ref{BEprop} of Section~\ref{BFsec} proves the result when $\I{1}^*{=}\emptyset$, i.e. when $k_i{\ge}2$ for all $i{\in}I^*$.  Section~\ref{BGsec} concludes with the general case. A coupling argument with a set of  independent $M/M/\infty$ queues is used and  then Corollary~\ref{PropBoundEhr} of Section~\ref{Ehrenfest}. 

\subsection{Some Linear Algebra}\label{LAsec}
The notations and assumptions of  Section~\ref{Not} are used. 
\begin{proposition}\label{InvK}
If  $\kappa{\in}\Omega(I)$, then there exists a unique solution $\ell_\kappa{=}(\ell_{\kappa,i}){\in}(\R_+^*)^{I^*}$,
  such that, for $i{\in}I^*$,
\begin{equation}\label{system2}
\kappa_i^+(\ell_{\kappa,i})^{k_i}= \kappa_{0i}{+}\sum_{j\in I^*\setminus\{i\}}(\ell_{\kappa,j})^{k_j}\kappa_{ji},
  \end{equation}
furthermore,
\begin{equation}\label{S22}
 ((\ell_{\kappa,i})^{k_i})= M^R_\kappa \cdot\left(\frac{\kappa_{0i}}{\kappa_i^+}\right),
\end{equation}
  where $M^R_\kappa$ is an $I^*{\times}I^*$ matrix whose coefficients are non-negative and depend only on $\kappa_{ij}$, $i{\in}I^*$, $j{\in}I$. 
\end{proposition}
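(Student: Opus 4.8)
The plan is to remove the apparent nonlinearity first. Setting $v_i{=}(\ell_{\kappa,i})^{k_i}$ for $i{\in}I^*$ turns System~\eqref{system2} into the \emph{linear} system
\[
\kappa_i^+v_i-\sum_{j\in I^*\setminus\{i\}}\kappa_{ji}v_j=\kappa_{0i},\qquad i\in I^*,
\]
for the unknown vector $v{=}(v_i)$. Since $x{\mapsto}x^{1/k_i}$ is a bijection of $\R_+^*$ onto itself, solving~\eqref{system2} in $(\R_+^*)^{I^*}$ is equivalent to solving this linear system in $(\R_+^*)^{I^*}$, so I would work entirely at the linear level and recover $\ell_{\kappa,i}{=}v_i^{1/k_i}$ at the very end.

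Next I would write the system as $Av{=}b$, where $b{=}(\kappa_{0i})_{i\in I^*}$ and $A{=}{-}\widetilde R^{\mathrm t}$ is the transpose of $-\widetilde R$, with $\widetilde R{=}(\kappa_{ij})_{i,j\in I^*}$ the principal submatrix of the generator $R_\kappa$ indexed by $I^*$; by~\eqref{KappaBullet} its diagonal is $\kappa_{ii}{=}{-}\kappa_i^+$, so $-\widetilde R$ has positive diagonal and nonpositive off-diagonal entries. The core step is to prove that $A$ is invertible with nonnegative inverse. I would factor $-\widetilde R{=}D(\mathrm{Id}{-}P)$ with $D{=}\mathrm{diag}(\kappa_i^+)$ and $P{=}D^{-1}(\kappa_{ij}\ind{i\ne j})_{i,j\in I^*}$ substochastic, the $i$th row of $P$ summing to $1{-}\kappa_{i0}/\kappa_i^+{\le}1$. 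Irreducibility of $R_\kappa$ on the finite set $I$ forces the chain killed on exiting $I^*$ to be absorbed at $0$ almost surely, which is precisely the content of $\rho(P){<}1$; hence $\mathrm{Id}{-}P$ is invertible with $(\mathrm{Id}{-}P)^{-1}{=}\sum_{m\ge0}P^m{\ge}0$, and therefore so are $-\widetilde R$ and $A$, with $A^{-1}{=}D^{-1}\bigl((\mathrm{Id}{-}P)^{-1}\bigr)^{\mathrm t}{\ge}0$. This yields existence and uniqueness of $v{=}A^{-1}b{\ge}0$; writing $b{=}D(\kappa_{0i}/\kappa_i^+)$ reads off the announced matrix $M^R_\kappa{=}A^{-1}D{=}D^{-1}\bigl((\mathrm{Id}{-}P)^{-1}\bigr)^{\mathrm t}D$, which is nonnegative and depends only on the coefficients $\kappa_{ij}$ with $i{\in}I^*$, $j{\in}I$.

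Finally I would upgrade $v{\ge}0$ to $v{>}0$. From $(A^{-1})_{ij}{=}(1/\kappa_i^+)\sum_{m\ge0}(P^m)_{ji}$ one gets $v_i{=}\sum_j(A^{-1})_{ij}\kappa_{0j}$, which is strictly positive as soon as there is an index $j$ with $\kappa_{0j}{>}0$ admitting a directed path $j{\to}\cdots{\to}i$ inside $I^*$. Such a $j$ comes from a \emph{simple} path $0{\to}j{\to}\cdots{\to}i$ in the graph of $R_\kappa$, which exists by irreducibility and, being simple, never returns to $0$ and hence stays in $I^*$ after its first step. This gives $v{\in}(\R_+^*)^{I^*}$ and the unique positive solution $\ell_{\kappa,i}{=}v_i^{1/k_i}$ of~\eqref{system2}. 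I expect the only genuinely delicate points to be the strict spectral bound $\rho(P){<}1$ (equivalently, nonsingularity of the M-matrix $-\widetilde R$) and the strict positivity of $v$; both are consequences of the irreducibility of $R_\kappa$, while the remainder is routine linear algebra.
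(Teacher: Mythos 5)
Your proof is correct and follows essentially the same route as the paper's: both arguments linearize the system via $v_i{=}(\ell_{\kappa,i})^{k_i}$ and obtain Relation~\eqref{S22} from the Neumann series of the substochastic matrix attached to the restriction of $R_\kappa$ to $I^*$, whose spectral radius is strictly less than $1$ by irreducibility. The one place where you diverge is in how existence, uniqueness and strict positivity are obtained: the paper observes that the linear system, completed with $z_0{=}1$, is exactly the stationary-measure equation $z\cdot R_\kappa{=}0$ for the irreducible finite chain on $I$, so all three properties follow at once from standard Markov chain theory, strict positivity included; you instead prove invertibility of the M-matrix $-\widetilde R$ by hand via the factorization $D(\mathrm{Id}{-}P)$ and add a separate simple-path argument for strict positivity of $v$. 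Your version is more self-contained and makes explicit both the nonnegativity of $M^R_\kappa$ and the mechanism behind positivity (a path $0{\to}j{\to}\cdots{\to}i$ avoiding $0$ after its first step); the paper's is shorter because the invariant-measure interpretation delivers positivity for free. Both arguments are valid and prove the same statement.
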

Recall that, from Relation~\eqref{KappaBullet}, if $i{\in}I^*$,
\[
\kappa_i^+=\kappa_{i0}{+}\sum_{j\ne i}\kappa_{ij}.
\]
\begin{proof}
The system~\eqref{system2} can be written as
  \[
  z\cdot R_{\kappa}=0,
  \]
  with  $z_0{=}1$ and $z_i{=}(\ell_{\kappa,i})^{k_i}$, for $i{\in}I^*$. This is simply the system of invariant measure equations for the Markov process associated to $R_\kappa$, introduced in Section \ref{kDef}. The irreducibility property gives the existence and uniqueness of such a solution $z$.

Relation~\eqref{S22} is just a linear algebra representation of this solution, based on the fact that the spectral radius of the matrix $R^*_\kappa{=}(\kappa_{ji}/\kappa_i^+, i, j{\in}I^*)$ is strictly less than $1$, which is a consequence of the irreducibility of $R_{\kappa}$,
  \[
\left(\kappa_i^+(\ell_{\kappa,i})^{k_i}\right) = \left(\sum_{m=0}^{+\infty} (R^*_\kappa)^m\right)\cdot (\kappa_{0i}).
  \]
\end{proof}
The following proposition is  a key result used in a coupling in the proof of Theorem~\ref{UnTh}.
\begin{proposition}\label{LemmaMiExist}
If $\kappa{\in}\Omega(I)$,  then for any $(\alpha_i){\in}(\R_+^*)^{n}$,  there exist two vectors $(m_i)$ and  $(M_i){\in}\R_+^n$ such that,
\begin{equation}\label{maM}
0<m_i<\alpha_i^{k_i}<M_i,\quad \forall i\in I^*,
\end{equation}
and   
  \begin{align}
		M_i\kappa_{i}^+  &> \kappa_{0i} +\sum_{j\in I^*\setminus\{i\}}M_j\kappa_{ji}, \label{EqM}\\
		m_i\kappa_{i}^+  &<\kappa_{0i}+ \sum_{j\in I^*, \ d(j)<d(i)}m_j\kappa_{ji}, \label{Eqm}
  \end{align}
where $d$ is the distance of Definition \ref{definitiond}. 
\end{proposition}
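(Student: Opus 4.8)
The plan is to treat the upper bounds $(M_i)$ and the lower bounds $(m_i)$ by two different mechanisms: a global linear-algebra argument for $(M_i)$, exploiting the exact balance solution of Proposition~\ref{InvK}, and a layer-by-layer induction on the distance $d$ of Definition~\ref{definitiond} for $(m_i)$.

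For the upper bounds, I would rewrite the balance system of Proposition~\ref{InvK} in matrix form. Setting $z_i{=}(\ell_{\kappa,i})^{k_i}$, $c_i{=}\kappa_{0i}/\kappa_i^+$, and letting $R^*_\kappa{=}(\kappa_{ji}/\kappa_i^+, i,j{\in}I^*)$ be the matrix of Proposition~\ref{InvK}, the system reads $(I{-}R^*_\kappa)z{=}c$, and the bound on the spectral radius gives that $(I{-}R^*_\kappa)^{-1}{=}\sum_{m\ge 0}(R^*_\kappa)^m$ has nonnegative entries. Dividing \eqref{EqM} by $\kappa_i^+{>}0$ shows it is equivalent to $(I{-}R^*_\kappa)M{>}c$ componentwise, that is $(I{-}R^*_\kappa)(M{-}z){>}0$. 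Picking any $v{\in}(\R_+^*)^{I^*}$ and setting $w{=}(I{-}R^*_\kappa)^{-1}v{=}\sum_m(R^*_\kappa)^m v{\ge}v{>}0$, the vector $M{=}z{+}\lambda w$ satisfies $(I{-}R^*_\kappa)(M{-}z){=}\lambda v{>}0$ for every $\lambda{>}0$, hence \eqref{EqM}; since $w$ is strictly positive, choosing $\lambda$ large enough also forces $M_i{>}\alpha_i^{k_i}$ for all $i$, giving the right-hand inequality of \eqref{maM}.

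For the lower bounds, the restricted sum $\sum_{j: d(j)<d(i)}$ in \eqref{Eqm} prevents a direct matrix inversion, and the argument is instead an induction on the layers $D_\ell{=}\{i{\in}I^*: d(i){=}\ell\}$, taken in increasing order of $\ell$. Irreducibility of $R_\kappa$ guarantees that every $i{\in}I^*$ is reachable from the source, so $d(i){<}\infty$ and the layers partition $I^*$. The crucial structural point, read off from Definition~\ref{definitiond}, is that each $i$ with $d(i){=}\ell$ admits a shortest-path predecessor $j$ with $d(j){=}\ell{-}1$ and $\kappa_{ji}{>}0$: when $\ell{=}1$ this predecessor is the source, contributing the strictly positive term $\kappa_{0i}$, and when $\ell{\ge}2$ it is an index $j{\in}I^*$ one layer closer. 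Because \eqref{Eqm} uses the strict inequality $d(j){<}d(i)$, indices in the same layer never appear in one another's constraints, so the layers decouple.

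Proceeding inductively, I would assume $m_j$ fixed positive for all $j$ with $d(j){<}\ell$. For each $i$ in layer $\ell$ the right-hand side of \eqref{Eqm}, namely $\kappa_{0i}{+}\sum_{j: d(j)<\ell} m_j\kappa_{ji}$, is then a fixed strictly positive quantity, as it contains the positive contribution of the predecessor identified above; it therefore suffices to choose $m_i{>}0$ small enough that both $m_i\kappa_i^+$ lies below this right-hand side and $m_i{<}\alpha_i^{k_i}$, which is possible since the right-hand side does not depend on $m_i$. Shrinking the $m_i$ of layer $\ell$ can only decrease the right-hand sides of later layers, but never destroys their positivity, secured by their own predecessors; hence no earlier constraint is disturbed and the induction closes, yielding $(m_i)$ with $0{<}m_i{<}\alpha_i^{k_i}$ and \eqref{Eqm}. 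The main delicate point is precisely this decoupling: one must verify that the definition of $d$ supplies, for every node and at every layer, a strictly closer predecessor feeding positive mass into \eqref{Eqm}, so that the lower bounds may be taken arbitrarily small without the restricted sums collapsing to zero.
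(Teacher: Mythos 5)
Your proposal is correct and follows essentially the same route as the paper: the lower bounds are built by exactly the same induction on the layers of the distance $d$, using the existence of a strictly closer predecessor with positive rate, and the upper bounds rest on the same key fact from Proposition~\ref{InvK} that $(I{-}R^*_\kappa)^{-1}{=}\sum_m (R^*_\kappa)^m$ is nonnegative. The only cosmetic difference is in packaging the upper bound: you perturb the equilibrium $z$ by a large positive multiple of $(I{-}R^*_\kappa)^{-1}v$, whereas the paper scales by a large factor $\rho$ the solution of the balance system with all external inputs set to $1$; both amount to adding a strictly positive vector whose image under $I{-}R^*_\kappa$ is strictly positive.
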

It should be noted that Relations~\eqref{EqM} and~\eqref{Eqm} are not symmetrical, because of the restriction on the summation using the distance $d$. The result will used for the vector $(\alpha_i)$ associated to the initial conditions, see Relation~\eqref{CondIni}.
\begin{proof}
Let $(z_i){=}((\ell_{\kappa,i})^{k_i})$ be the solution of the system of the type~\eqref{system2}, for $i{=}1$,{\ldots}, $n$, 
          \[
                 z_i\kappa_{i}^+ = 1+ \sum_{j\in I^*\setminus\{i\}}z_j\kappa_{ji}. 
                 \]
Relations $\alpha_i^{k_i}{<}M_i$ and~\eqref{EqM} hold if we take $M_i{=}\rho z_j$, with
                 \[
                 \rho >  \max\left(\frac{\alpha_i^{k_i}}{z_i}, \kappa_{0i}: i{=}1, \ldots, n,\right).
                 \]
                 The construction of  $(m_i)$ for the lower bounds  is done by induction on the values of $d(i)$.

                 If $i{\in}I^*$ is such that $d(i){=}1$, then necessarily $\kappa_{0i}{>}0$, then we can take $m_i$ so that
                 \[
                 0<m_i<\min\left(\frac{\kappa_{0i}}{\kappa_i^+},\alpha_i^{k_i}\right).
                 \]
                 If $d(i){=}p{\ge}2$, then there exists $j{\in}I^*$ such that $d(j){=}p{-}1$ and $\kappa_{ji}{>}0$, therefore we can take $m_i{>}0$ such that 
                 \[
m_i< \min\left( \alpha_i^{k_i},\frac{1}{\kappa_i^+}\left(\kappa_{0i}+ \sum_{j\in I^*, d(j)<d(i)}m_j\kappa_{ji}\right)\right), 
\]
since the sum of the second term is strictly positive. 
The proposition is proved. 
\end{proof}

\subsection{CRN with Only Fast Processes}\label{BFsec}
In this section it is assumed that $\I{1}^*$ is empty, i.e. $k_i{\ge}2$ for all $i{\in}\{1,\ldots,n\}$.
\begin{proposition}\label{BEprop}
  If $\I{1}^*{=}\emptyset$, then there exist two vectors $(m_i)$ and $(M_i)$ with positive coordinates such that for any $T{>}0$, the sequence  $(\P\left( {\cal E}_N\right))$ is converging to $1$, 
where ${\cal E}_N$ is the event defined by Relation~\eqref{EN}.
\end{proposition}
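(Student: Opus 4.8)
The plan is to fix once and for all the two vectors $(m_i)$ and $(M_i)$ furnished by Proposition~\ref{LemmaMiExist}, applied to the vector $(\alpha_i)$ of the initial condition~\eqref{CondIni}, so that~\eqref{maM}, \eqref{EqM} and~\eqref{Eqm} all hold. With these vectors the event ${\cal E}_N$ can fail before time $T$ only if the stopping time $H_N{\wedge}T_N$ of~\eqref{HNTN} is ${\le}T$, so it suffices to prove $\P(H_N{\le}T){\to}0$ and $\P(T_N{\le}T){\to}0$ separately; note that~\eqref{maM} forces $\overline{X}_N(0)$ strictly inside ${\cal K}_I$ for $N$ large. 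The whole argument rests on comparing, coordinate by coordinate, the process $X_i^N$ with a generalized $M/M/\infty$ queue of the type studied in Section~\ref{Ehrenfest}, i.e. a one-species $k$-unary CRN with exponent $k_i$, to which Corollary~\ref{PropBoundEhr} can be applied.

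For the upper bound I would freeze the birth rate of the $i$th coordinate at its worst case. On $\{t{<}T_N\}$ every coordinate satisfies $(X_j^N)^{(k_j)}{\le}M_jN$, so the total creation rate of $X_i^N$ is at most $B_iN$ with $B_i{=}\kappa_{0i}{+}\sum_{j\ne i}\kappa_{ji}M_j$, while its destruction rate is exactly $\kappa_i^+(X_i^N)^{(k_i)}$. Using the Poisson representation~\eqref{SDE} I would build a monotone coupling with a process $\widetilde X_i$ of constant birth rate $B_iN$ and death rate $\kappa_i^+\widetilde X_i^{(k_i)}$ — a generalized $M/M/\infty$ queue with $\lambda{=}B_i$ and $\mu{=}\kappa_i^+$ — such that $X_i^N(t){\le}\widetilde X_i(t)$ for all $t{<}T_N$. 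Relation~\eqref{EqM} gives $B_i/\kappa_i^+{<}M_i$ and~\eqref{maM} gives $\alpha_i^{k_i}{<}M_i$, so Corollary~\ref{PropBoundEhr}, applied to $\widetilde X_i$ on an interval with upper endpoint $M_i$, shows that $\sup_{s\le T}\widetilde X_i(s)^{(k_i)}/N{<}M_i$ with probability tending to $1$. On $\{T_N{\le}T\}$ some coordinate $i_0$ reaches $M_{i_0}N$ at $T_N$, and since the coupling is valid up to $T_N$ the dominating queue $\widetilde X_{i_0}$ reaches the same level; a union bound over $i$ yields $\P(T_N{\le}T){\to}0$.

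The lower bound is the mirror image. On $\{t{<}H_N\}$ one has $(X_j^N)^{(k_j)}{\ge}m_jN$ for every $j$, so the creation rate of $X_i^N$ is at least $\beta_iN$ with $\beta_i{=}\kappa_{0i}{+}\sum_{j\ne i}\kappa_{ji}m_j$; I keep here the full sum over $j{\ne}i$, so that~\eqref{Eqm} immediately gives $\beta_i{>}m_i\kappa_i^+$, hence $\beta_i/\kappa_i^+{>}m_i$ (the distance $d$ only serves to make the $(m_i)$ constructible in Proposition~\ref{LemmaMiExist} and plays no further role in the bounding step). I would then couple $X_i^N$ from below with a generalized $M/M/\infty$ queue $\widehat X_i$ of birth rate $\beta_iN$ and death rate $\kappa_i^+\widehat X_i^{(k_i)}$, keeping $X_i^N(t){\ge}\widehat X_i(t)$ for $t{<}H_N$, and apply Corollary~\ref{PropBoundEhr} — with $\beta_i/\kappa_i^+{>}m_i$ and $\alpha_i^{k_i}{>}m_i$ from~\eqref{maM} — to obtain $\inf_{s\le T}\widehat X_i(s)^{(k_i)}/N{>}m_i$ with high probability. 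Exactly as above, a crossing of $m_{i_0}N$ by $X_{i_0}^N$ at $H_N$ forces the same for $\widehat X_{i_0}$, and a union bound gives $\P(H_N{\le}T){\to}0$.

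The delicate point, which I would write out carefully, is the self-referential nature of these couplings: the comparison $X_i^N{\le}\widetilde X_i$ (resp. $X_i^N{\ge}\widehat X_i$) is guaranteed only while $t{<}T_N$ (resp. $t{<}H_N$), which is exactly the window on which the bound $B_iN$ (resp. $\beta_iN$) on the fluctuating birth rate of $X_i^N$ holds. One must verify that the monotone coupling can genuinely be realised pathwise from the Poisson processes of~\eqref{SDE} — the births of $X_i^N$ being a thinning of those of $\widetilde X_i$, the deaths synchronised when the two processes agree, and the monotone death rate $\kappa_i^+(\cdot)^{(k_i)}$ preserving the order otherwise — and that the order survives the very jump at which $X_i^N$ hits the boundary, so that $\{T_N{\le}T\}$ (resp. $\{H_N{\le}T\}$) is indeed contained in the corresponding event for the dominating queues, to which Corollary~\ref{PropBoundEhr} applies.
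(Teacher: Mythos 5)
Your proposal is correct and follows essentially the same route as the paper: fix $(m_i)$ and $(M_i)$ via Proposition~\ref{LemmaMiExist}, build monotone pathwise couplings (valid up to $T_N$, resp.\ $H_N$) with coordinatewise generalized $M/M/\infty$ queues whose constant birth rates are the frozen worst-case input rates, and conclude with Corollary~\ref{PropBoundEhr} and a union bound. The only cosmetic difference is that in the lower bound you keep the full sum $\sum_{j\ne i}\kappa_{ji}m_j$ where the paper restricts to $d(j){<}d(i)$; as you observe, Relation~\eqref{Eqm} still yields the needed strict inequality, so this changes nothing.
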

\begin{proof}
  Let  $H_N$ and $T_N$ be  the stopping times defined by Relation~\eqref{HNTN}. 
We start with the stopping time $T_N$. We take $(M_i)$ of Proposition~\ref{LemmaMiExist} satisfying Relations~\eqref{maM} and~\eqref{EqM}. Let $(Y_N(t))=(Y_i^N(t))$ be the solution of the SDE
\begin{multline*}
	\diff Y_i^N(t)= k_i\cal{P}_{0i}\left( (0, \kappa_{0i}N), \diff t\right)+ \sum_{j\in I^*\setminus\{i\}} k_i \cal{P}_{ji}\left(\left( 0, \kappa_{ji}M_j N\right), \diff t\right)\\
	-\sum_{j\in I\setminus\{i\}} k_i \cal{P}_{ij}\left(\left( 0, \kappa_{ij}(Y_i^N(t{-}))^{(k_i)}\right), \diff t\right),
\end{multline*}
with $Y_N(0){=}X_N(0)$. Note that we have necessarily that $Y_i^N(t){-}X_i^N(t){\in}k_i\Z$, for all $i{\in}I^*$ and $t{\ge}0$.

We  prove  that, for all $t{<}T_N$, the relations $X_i^N(t){\le}Y_i^N(t)$ hold for any $i{\in}I^*$. This is done by induction on the sequence of the instants of jumps of the process $((X_i^N(t),Y_i^N(t)),i{\in}I^*)$ in the time interval $[0,T_N]$.  As  in the proof of Proposition~\ref{PropConvEhr}, we assume that $X_i^N(0){\le}Y_i^N(0)$ and $X_i^N(0)^{(k_i)}{\leq} M_iN$ hold for all $i{\in}I^*$,  and denote by $t_1$  the instant of the first jump of the process $(X_i^N(t),Y_i^N(t),i{\in}I^*)$. We show that the above inequalities also hold at time $t_1$.

For all $i{\in}I^*$,   we have $(X^N_i(0))^{(k_i)}\leq M_iN$,  hence, for, $j{\in}I^*$ and $t{\ge}0$, 
\[
\cal{P}_{ji}\left(\left( 0, \kappa_{ji}(X^N_i(t))^{(k_i)}\right){\times}[0, t]\right) \leq \cal{P}_{ji}\left(\left( 0, \kappa_{ji}M_j N\right){\times}[0, t]\right),
\] 
and if $(X_i^N(t))$ has a jump up at $t_1$ due to ${\cal P}_{ji}$, so does $(Y_i^N(t))$. Consequently, the inequality is clearly preserved if the size of the first jump is positive. 

If $t_1$ is an instant of a jump with negative size for $(Y_i^N(t))$, if $X_i^N(0){<}Y_i^N(0)$, then necessarily $Y_i^N(0){-}X_i^N(0){\ge}k_i$, the relation $X_i^N(t_1){\le}Y_i^N(t_1)$ is therefore satisfied. All the other possibilities for $t_1$ clearly preserve the desired relations. Our assertion has been established. 

 For $i{\in}I^*$, the process $(Y_i^N(t))$ has the same distribution as the process of a generalized $M/M/\infty$  queue, introduced in Section \ref{Ehrenfest}, with arrival rate $\lambda_i N$ and departure rate $\mu_i$ given by
 \[
 \lambda_i=\kappa_{0i}{+}\sum_{j{\in}I^*{\setminus}\{i\}} M_j\kappa_{ji}, \quad \mu_j=\kappa_i^+.
 \]
	We have $M_i{>} \lambda_i/\mu_i$ for all $i{\in}I^*$ because of Relation~\eqref{EqM}. Since all $k_i$'s are greater than $2$, Corollary~\ref{PropBoundEhr} applied to these $n$ generalized $M/M/\infty$ queues  shows  that the relation
 \[
 \lim_{N\to+\infty}\P(T_N\le T)=0
 \]
holds. We now take care of the stopping time $H_N$.  A vector $(m_i)$ satisfying Relations~\eqref{maM} and~\eqref{Eqm} of Proposition~\ref{LemmaMiExist}  is fixed.
 Let $(Z_N(t)){=}(Z_i^N(t))$ be the solution of the SDE
\begin{multline*}
	\diff Z_i^N(t)= k_i\cal{P}_{0i}\left( (0, \kappa_{0i}N), \diff t\right)+ \sum_{\substack{j\in I^*\\ d(j)< d(i)}} k_i \cal{P}_{ji}\left(\left( 0, \kappa_{ji}m_j N\right), \diff t\right)\\
	-\sum_{j\in I\setminus\{i\}} k_i \cal{P}_{ij}\left(\left( 0, \kappa_{ij}(Y_i^N(t{-}))^{(k_i)}\right), \diff t\right),
\end{multline*}
with $Z_N(0){=}X_N(0)$.
It is easily seen by induction on the sequence of the instants of jumps of the process $(X_i^N(t),Z_i^N(t))$  that the  relation $X_i^N(t){\ge}Z_i^N(t)$ holds for all $t{<}H_N$ and $i{\in}I^*$.

 For $i{\in}I^*$, the process $(Z_i^N(t))$ has the same distribution as the process of a generalized $M/M/\infty$  queue  with arrival rate $\lambda_i N$ and departure rate $\mu_i$ given by
 \[
 \lambda_i\steq{def} \kappa_{0i}{+}\sum_{\substack{j\in I^*\\ d(j)< d(i)}} m_j\kappa_{ji}, \quad \mu_j\steq{def}\kappa_i^+.
 \]
Since the vector $(m_i)$ has been chosen so that $m_i{<}\lambda_i/\mu_i$ holds for all $i{\in}I^*$, we can conclude in the same way as before using Corollary~\ref{PropBoundEhr}.
The proposition is proved. 
\end{proof}
\subsection{Proof of Theorem~\ref{UnTh}}\label{BGsec}

We first take care of the indices in the set $\I{1}^*$. We define 
    \begin{equation}\label{mM1}
      \begin{cases}
      \displaystyle m_1^1=\frac{1}{2}\alpha_{\rm min}\exp({-}\kappa_{\rm max}^+ T),\\
      \displaystyle      M_1^1=2k_{\rm max}\left( \kappa_0^+ T{+}\sum_{j\in \I{1}^*}\alpha_j\right),
      \end{cases}     
    \end{equation}
    with $x_{\rm max/min}{=}\max{/}\min(x_i,1{\le}i{\le}n)$ for $x{\in}\R_+^n$. 

We show here that for all $i{\in}\I{1}^*$, we can choose $m_i{=}m_1^1$ and $M_i{=}M_1^1$. For all $i{\in}I^*$, it is easily seen that the following upper bound, for $t{\ge}0$, 
\begin{equation}\label{UpI1}
\sup_{t\le T}\sum_{i{\in}\I{1}^*}k_i X_i^N(t)\leq
k_{\rm max}\left(\sum_{i{\in}\I{1}^*} x_i^N+\sum_{i{\in}\I{2+}^*} x_i^N+\sum_{i{\in}\I{1}^*}{\cal P}_{0i}([0,\kappa_{0i}N]{\times}[0,T])\right).
\end{equation}
holds. The right-hand side of the last relation divided by $N$ converges almost surely to
\[
k_{\rm max}\left(\kappa_{0}^+T{+}\sum_{i{\in}\I{1}^*} \alpha_i\right),
\]
hence
\begin{equation}\label{1up}
\lim_{N\to+\infty}\P\left(\sup_{t\leq T} \max_{i{\in}\I{1}^*}\frac{X_i^N(t)}{N}\ge M_1^1 \right)=0.
\end{equation}

Since the lifetime of a molecule of type $i{\in}\I{1}^*$ is  exponentially distributed   with parameter $\kappa_i^+$,  the number of  species  $i$ at time $T$ is stochastically greater than
\[
	\sum_{k=1}^{x_i^N} \ind{E_k^{i+}\ge T},
\]
where $(E_k^{i+})$ is a sequence of i.i.d. exponential random variables with parameter $\kappa_i^+$. This last quantity divided by $N$ converges almost surely to $\alpha_i\exp(-\kappa_i^+T)$. We therefore obtain the relation
\begin{equation}\label{1dow}
\lim_{N\to+\infty}\P\left(\inf_{t\leq T} \min_{i{\in}\I{1}^*}\frac{X_i^N(t)}{N}\le  m_1^1 \right)=0.
\end{equation}

From Relations~\eqref{1up} and~\eqref{1dow}, for  node $i{\in}\I{2+}^*$, the input rate from node  $j{\in}\I{1}^+$ on the time interval $[0,T]$ is, with high probability,  upper bounded by $\kappa_{ji}M_1^1$ and lower bounded by $\kappa_{ji}m_1^1$.

Define $\overline{\kappa}{=}(\overline{\kappa}_{ij},i,j{\in}\I{2+})$ and 
 $\underline{\kappa}{=}(\underline{\kappa}_{ij},i,j{\in}\I{2+})$, by, for $i{\in}\I{2+}$, 
 \[
					\begin{cases}
									\overline{\kappa}_{ij}= \underline{\kappa}_{ij}= \kappa_{ij}, \quad j{\in}\I{2+};\\
									\overline{\kappa}_{i0}=\underline{\kappa}_{i0}= \kappa_{i0} +\sum_{j\in \I{1}^*}\kappa_{ij};\\
									\overline{\kappa}_{0i}= \kappa_{0i} +\sum_{j\in \I{1}^*}\kappa_{ji}M_1^1; \\
									\underline{\kappa}_{0i}= \kappa_{0i} +\sum_{j\in \I{1}^*}\kappa_{ji}m_1^1. \\
					\end{cases}
	\]
        Using a coupling argument, one can define the Markov processes $(Y^{2}_N(t))$, respectively  $(Z^{2}_N(t))$, associated  to the $k$-Unary CRN with species $\I{2,+}^*$, with complexes $(k_iS_i, i\in \I{2+}^*)$ and constant of reactions $\overline{\kappa}$, respectively $\underline{\kappa}$, both starting at $X^N_{[2, +]}(0)$ and that verify for all $t\leq T_N\wedge H_N$, 
					\[
							Z_i^{N,2}(t)\leq X_i^N(t)\leq Y^{N, 2}_i(t), \quad \forall i\in \I{2+}^*. 
					\]
				
Since $\overline{\kappa}{\in}\Omega(\I{2+})$,  Proposition~\ref{BEprop} applied to the process $(Y^2_N(t))$ shows that there exists a vector $(M_i,{\in}\I{2+})$, such that
        \[
        \lim_{N\to+\infty}\P\left( \overline{X}_N(t)\in\prod_{i=1}^N \left(0,\sqrt[k_i]{M_i}\right), \forall t{\le}T\right)=1.
        \]
        Similarly, by considering $\underline{\kappa}$, there exists a vector $(m_i,{\in}\I{2+})$ with positive components such that
        \[
        \lim_{N\to+\infty}\P\left( \overline{X}_N(t)\in\prod_{i=1}^N \left(\sqrt[k_i]{m_i},\sqrt[k_i]{M_i}\right), \forall t{\le}T\right)=1.
        \]
The theorem is proved.         
\section{CRN with only fast processes}\label{FastSec}
When $\I{1}^*$ is empty, i.e. $k_i{\ge}2$ for all $i{\in}\{1,\ldots,n\}$, the time evolutions of all species are fast processes, see Section~\ref{HiTS}. Theorem~\ref{Theorem} is only about the convergence in distribution of the sequence of occupation measures $(\Lambda_N)$ on $\R_+{\times}\R_+^n$ defined by Relation~\eqref{OccMeas}. The absence of chemical species $i$ such that $k_i{=}1$ gives a kind of instantaneous equilibrium property in the sense that the limit in distribution of $(\Lambda_N)$ is homogeneous with respect to the first coordinate, the time coordinate. The main result of this section is Theorem~\ref{Thk2} which is simply Theorem~\ref{Theorem} stated in this context. The motivation of such a separate proof  is that it is focused, in our view, on the key argument of the general proof.   The identification of possible limits of $(\Lambda_N)$  is done by induction via the use of an entropy function. The proof of the general case follows also such line  but in a ``non-homogeneous'', technically more complicated, context. 

\subsection{Tightness of $(\Lambda_N)$}
We first establish the  tightness of  $(\Lambda_N)$ for the convergence in distribution in the general case.
\begin{proposition}\label{PropTightkgeq2}
If the subset  $\I{1}^*$ is empty and if the initial conditions satisfy Relation~\eqref{CondIni}, then the sequence of measure valued processes $(\Lambda_N)$ on $[0,T]\times (\R_+^*)^{I^*}$ is tight for the convergence in distribution.  Any limiting point $\Lambda_\infty$ can be expressed as, 
	\begin{equation}\label{ConvKurtzkgeq2}
		\croc{\Lambda_\infty, f}=\int_{[0, T]\times {\cal K}_I}f(s, x) \pi_s(\diff x) \diff s, 
	\end{equation}	
for any function $f{\in}{\cal C}_c([0, T]{\times}(\R_+)^{I^*})$, where $(\pi_s)$ is an optional process with values in  ${\cal P}({\cal K}_I)$, the set of probability measures on the compact subset ${\cal K}_I$ defined by Relation~\eqref{EN}. 
\end{proposition}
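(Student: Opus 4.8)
The plan is to read the result essentially off Theorem~\ref{UnTh}: tightness will follow from the uniform confinement of $(\overline{X}_N(t))$ to a fixed compact set, and the representation~\eqref{ConvKurtzkgeq2} will follow by disintegrating any limit point against its time-marginal. The genuine work, namely showing that the configurations do not escape to the boundary or to infinity, has already been done in Theorem~\ref{UnTh}, so here the argument is mostly bookkeeping together with one measurability point.

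First I would record the two structural features of $\Lambda_N$ on which everything rests. Testing $\Lambda_N$ against functions depending only on the time variable shows that the first marginal of $\Lambda_N$ is the Lebesgue measure on $[0,T]$; in particular the total mass $\croc{\Lambda_N,1}{=}T$ is deterministic. Next, Theorem~\ref{UnTh} provides the event $\mathcal{E}_N{=}\{\overline{X}_N(t){\in}\mathcal{K}_I,\ \forall t{\le}T\}$ with $\P(\mathcal{E}_N){\to}1$, on which $\Lambda_N$ is supported by $[0,T]{\times}\overline{\mathcal{K}_I}$. Since the constants $(m_i)$ are positive, $\overline{\mathcal{K}_I}$ is bounded and bounded away from the boundary of the positive orthant, hence a compact subset of $(\R_+^*)^{I^*}$. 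Combining the deterministic total mass with this uniform compact-containment estimate gives, through the standard tightness criterion for random measures (see~\citet{Dawson}), that for every $\eps{>}0$ the set $A_\eps{=}\{\mu:\mu([0,T]{\times}(\R_+^*)^{I^*}){\le}T,\ \mu((\,[0,T]{\times}\overline{\mathcal{K}_I}\,)^c){\le}\eps\}$ is relatively compact for the weak topology and satisfies $\P(\Lambda_N{\in}A_\eps){\ge}\P(\mathcal{E}_N)$ for $N$ large. This yields tightness of $(\Lambda_N)$ for the convergence in distribution.

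I would then characterize the limit points via Prohorov's theorem. Let $\Lambda_\infty$ be the limit of a subsequence. Passing to the limit in $\croc{\Lambda_N,f(s)}{=}\int_0^T f(s)\diff s$ shows that the first marginal of $\Lambda_\infty$ is again the Lebesgue measure on $[0,T]$, while the compact-containment estimate shows that $\Lambda_\infty$ is almost surely supported by $[0,T]{\times}\overline{\mathcal{K}_I}$. As $\Lambda_\infty$ is then a finite measure on the product $[0,T]{\times}\overline{\mathcal{K}_I}$ with $\overline{\mathcal{K}_I}$ compact and first marginal equal to $\diff s$, the disintegration theorem produces a measurable family $(\pi_s,s{\in}[0,T])$ of measures on $\overline{\mathcal{K}_I}$ with $\croc{\Lambda_\infty,f}{=}\int_0^T\!\int f(s,x)\pi_s(\diff x)\diff s$; the normalization of the time-marginal forces $\pi_s$ to have total mass $1$ for almost every $s$, so that $\pi_s{\in}\mathcal{P}(\mathcal{K}_I)$.

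The only genuinely delicate point is the final assertion that $(\pi_s)$ can be taken to be an \emph{optional} process. The raw disintegration only produces conditional measures defined up to a Lebesgue-null set of times and measurable in $(\omega,s)$ jointly; upgrading this to an optional version adapted to the filtration carrying $\Lambda_\infty$ is what requires care, and I expect this to be the main obstacle. I would handle it with the measurable-selection/measurable-disintegration arguments standard in the stochastic averaging literature for random measures, exploiting that $\Lambda_\infty$ is itself adapted as a limit of the adapted occupation measures $\Lambda_N$. Everything else in the proof is routine once Theorem~\ref{UnTh} is in hand.
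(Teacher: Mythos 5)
Your proposal is correct and follows essentially the same route as the paper: the only substantive input is the compact-containment estimate from Theorem~\ref{UnTh} (giving $\Lambda_N([0,T]{\times}{\cal K}_I)\ge T\,\mathbbm{1}_{{\cal E}_N}$ and hence $\E(\Lambda_N([0,T]{\times}{\cal K}_I))\to T$), after which the paper simply invokes Lemmas~1.3 and~1.4 of Kurtz (1992), which package exactly the tightness criterion and the disintegration into an optional family $(\pi_s)$ that you spell out by hand and partially defer to the literature.
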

See~\citet{Dawson} for a presentation of the convergence in distribution of measure-valued processes. The optional property of $(\pi_s)$ is used only to have convenient measurability properties  so that time-integrals with respect to $(\pi_s,s{>}0)$ are  indeed  random variables. See Section VI.4 of~\citet{Rogers2}.
\begin{proof}
  We take the vectors $(m_i)$ and $(M_i)$ of  Theorem~\ref{UnTh}, and ${\cal K}_I$ the compact set of $\R_+^n$ and ${\cal E}_N$  the event  defined in Relation~\eqref{EN}.
Since $\Lambda_N([0,T]{\times}{\cal K}_I){\ge}T {\mathbbm 1}_{{\cal E}_N}$, with Theorem~\ref{UnTh}, we obtain the relation
  \[
  \lim_{N\to+\infty}\E\left(\Lambda_N\left([0,T]{\times}{\cal K}_I\right)\right)=T.
  \]
Lemma~1.3 of~\citet{Kurtz1992} gives that the sequence of random measures $(\Lambda_N)$ is tight for the convergence in distribution, and Lemma~1.4 of the same reference gives the representation~\eqref{ConvKurtzkgeq2}. The proposition is proved. 
\end{proof}
In the following we assume that $\Lambda_\infty$ is a limit of a subsequence $(\Lambda_{N_r})$ with the representation~\eqref{ConvKurtzkgeq2}.
\begin{lemma}\label{lemaux}
  If $f$ is a continuous function on $\R_+^{I^*}$, then the relation
  \[
\lim_{r\to+\infty} \left(\int_0^t f\left(\overline{X}_{N_r}(s)\right)\diff s\right)
  =\left(\int_0^t \int_{\R_+^{I^*}}f(x) \pi_s(\diff x) \diff s, \right)
  \]
  holds for the convergence in distribution of processes. 
\end{lemma}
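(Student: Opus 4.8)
The plan is to combine a tightness argument for the integrated processes in $\cal{C}([0,T])$ with the identification of their finite-dimensional limits through the convergence $\Lambda_{N_r}{\to}\Lambda_\infty$. Write $Y_N(t){=}\int_0^t f(\overline{X}_N(s))\diff s$ and $Y_\infty(t){=}\int_0^t\int_{\R_+^{I^*}} f(x)\pi_s(\diff x)\diff s$, so that the assertion is the convergence in distribution $Y_{N_r}{\to}Y_\infty$ in $\cal{C}([0,T])$. First I would reduce to the case where $f$ is bounded with compact support. By Theorem~\ref{UnTh}, on the event ${\cal E}_N$, whose probability tends to $1$, the process $(\overline{X}_N(s))$ stays in the compact set ${\cal K}_I$ for all $s{\le}T$, and by Proposition~\ref{PropTightkgeq2} the measures $\pi_s$ are carried by ${\cal K}_I$. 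Choosing $\chi{\in}\cal{C}_c(\R_+^{I^*})$ with $\chi{\equiv}1$ on a neighborhood of ${\cal K}_I$ and replacing $f$ by $f\chi$ alters neither $Y_{N_r}$ on ${\cal E}_{N_r}$ nor $Y_\infty$, so I may assume $f{\in}\cal{C}_c(\R_+^{I^*})$; in particular $\|f\|_\infty{<}{+}\infty$ and $Y_N(t){=}\croc{\Lambda_N,\mathbbm{1}_{[0,t]}{\otimes}f}$.

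Tightness of $(Y_{N_r})$ in $\cal{C}([0,T])$ is then immediate from the deterministic bound $|Y_N(t){-}Y_N(t')|{\le}\|f\|_\infty|t{-}t'|$, which makes the family uniformly Lipschitz, hence uniformly bounded and equicontinuous on $[0,T]$. The modulus-of-continuity criterion, Theorem~7.3 of~\citet{Billingsley}, applies, and every limit point is Lipschitz with constant $\|f\|_\infty$.

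It remains to identify the finite-dimensional limits, and here lies the only delicate point, which I expect to be the main obstacle: the test function $\mathbbm{1}_{[0,t]}{\otimes}f$ is discontinuous in the time variable, so the weak convergence of the random measures $\Lambda_{N_r}$ cannot be invoked for it directly. For fixed times $0{\le}t_1{<}\cdots{<}t_m{\le}T$ I would sandwich each time-indicator between continuous functions $h_{j,\delta}^-{\le}\mathbbm{1}_{[0,t_j]}{\le}h_{j,\delta}^+$ taking values in $[0,1]$ and agreeing with $\mathbbm{1}_{[0,t_j]}$ off the interval $(t_j{-}\delta,t_j{+}\delta)$. The crucial observation is that the time-marginal of $\Lambda_N$ is \emph{exactly} Lebesgue measure for every $N$ (take a test function depending only on the time variable in Relation~\eqref{OccMeas}), so $\Lambda_N((t_j{-}\delta,t_j{+}\delta){\times}\R_+^{I^*}){\le}2\delta$ deterministically, whence $|Y_N(t_j){-}\croc{\Lambda_N,h_{j,\delta}^+{\otimes}f}|{\le}2\delta\|f\|_\infty$ uniformly in $N$, with the analogous bound relating $Y_\infty(t_j)$ to $\croc{\Lambda_\infty,h_{j,\delta}^+{\otimes}f}$ via the representation~\eqref{ConvKurtzkgeq2}. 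Since $h_{j,\delta}^\pm{\otimes}f{\in}\cal{C}_c(\R_+{\times}\R_+^{I^*})$, the continuous mapping theorem gives that the vectors $(\croc{\Lambda_{N_r},h_{j,\delta}^+{\otimes}f})_j$ converge in distribution to $(\croc{\Lambda_\infty,h_{j,\delta}^+{\otimes}f})_j$; a standard three-term estimate (letting $r{\to}{+}\infty$ first and then $\delta{\to}0$, the $2\delta\|f\|_\infty$ error being uniform in $r$) then yields the joint convergence $(Y_{N_r}(t_j))_j{\to}(Y_\infty(t_j))_j$ in distribution. Combined with the tightness of the previous paragraph, this identifies $Y_\infty$, a continuous process determined by its finite-dimensional distributions, as the unique limit point of $(Y_{N_r})$, and proves the lemma.
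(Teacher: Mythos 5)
Your proof is correct and follows essentially the same route as the paper's: tightness of $(\int_0^t f(\overline{X}_{N_r}(s))\diff s)$ via the uniform Lipschitz bound coming from Theorem~\ref{UnTh} and the modulus-of-continuity criterion, followed by identification of the finite-dimensional marginals through the convergence $\Lambda_{N_r}{\to}\Lambda_\infty$. The paper compresses the second step into one sentence, whereas you supply the detail it omits --- the sandwich of $\mathbbm{1}_{[0,t_j]}$ by continuous functions together with the observation that the time-marginal of $\Lambda_N$ is exactly Lebesgue measure --- which is exactly the right way to handle the discontinuous test function.
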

\begin{proof}
  This is a straightforward use of  the criterion of modulus of continuity,  see Theorem~7.3 of~\citet{Billingsley}, and of Theorem~\ref{UnTh}. 
For $s{\le}t$, on the event ${\cal E}_N$, we have
\[
\int_s^t f\left(\overline{X}_{N_r}(s)\right)\diff s\le 2(t{-}s)\sup_{x{\in}{\cal K}_I}|f(x)|,
\]
with the notations of  Relation~\eqref{EN}. We conclude with the identification of the finite marginals. 
\end{proof}

As we have seen in Section~\ref{HiTS}, for $i{\in}I^*$, the value of $k_i$ gives in fact the natural timescale of the process $(\overline{X}_i^N(t))$.  On the event ${\cal E}_N$,  see Relation~\eqref{EN},  every reaction has a rate of order $N$, in particular, the rate at which  the process $(X_i^N(t))$ jumps of $\pm k_i$ is of order $N$. With the scaling in space of the process,  $(\overline{X}^N_i(t))$ is significantly changed  when there are $N^{1/k_i}$ reactions changing $(X_i^N(t))$, and therefore after a duration of time of the order of $N^{1/k_i-1}$.  If  for two species $i$ and $j$, $k_i{>}k_j$, then the process $(\overline{X}_i^N(t))$ changes more rapidly  than the process $(\overline{X}_j^N(t))$.  
   
From now on in this section it is assumed that $\I{1}^*$ is empty. 
\subsection{A Limiting Equation}
For a function $f\in \cal{C}_c^2\left((\R_+)^{I^*}\right)$, the SDE~\eqref{SDE} gives directly, for $t{\in}[0, T]$,  
\begin{multline}\label{fSDE}
	f\left(\overline{X}_N(t)\right)= f\left(\overline{X}_N(0)\right) +M_{f,N}(t) +\int_0^{t}\sum_{i\in I^*} \kappa_{0i}N\nabla_{\frac{k_i}{N^{1/k_i}}e_i}(f) (\overline{X}_N(s)) \diff s\\
		+\int_0^{t}\sum_{\substack{i,j\in I,\\ i\neq 0}} \kappa_{ij}(X_i^N(s))^{(k_i)}\nabla_{-\frac{k_i}{N^{1/k_i}}e_i+\frac{k_j}{N^{1/k_j}}e_j}(f) (\overline{X}_N(s)) \diff s,
\end{multline}
with the notations
\begin{itemize}
	\item for $x$, $a{\in} \R^{I^*}$, $\nabla_{a}(f)(x){=}f(x{+}a){-}f(x)$;
	\item for $i{\in} I^*$, $e_i$ is the $i$-th unit vector of $\R^{I^*}$, and the convention $e_0{=}0$,
\end{itemize}
and $(M_{f,N}(t))$ is local martingale whose previsible increasing process is given by, for $t{\le}T$,
\begin{multline}\label{CfSDE}
\croc{M_{f,N}}(t)=\int_0^{t}\sum_{i\in I^*} \kappa_{0i}N\left(\nabla_{\frac{k_i}{N^{1/k_i}}e_i}(f) (\overline{X}_N(s))\right)^2 \diff s\\
		+\int_0^{t}\sum_{\substack{i,j\in I,\\ i\neq 0}} \kappa_{ij}(X_i^N(t))^{(k_i)}\left(\nabla_{-\frac{k_i}{N^{1/k_i}}e_i+\frac{k_j}{N^{1/k_j}}e_j}(f) (\overline{X}_N(s))\right)^2 \diff s
\end{multline}

\begin{proposition}\label{propEqLimit}
If the subset  $\I{1}^*$ is empty and $(\Lambda_{\infty})$ is a limiting point of  $(\Lambda_N)$ with the representation~\eqref{ConvKurtzkgeq2}, then, 
for any $p{\geq} 2$ and $f{\in}\cal{C}^2({\cal K}_{\I{2,p}})$,  almost surely, the relation
\begin{equation}\label{eqLimitMeas}
		\int_0^t \int_{{\cal K}_I}\sum_{i\in \I{p}^*} \left(\kappa_{0i}{+}\sum_{j\in I^*{\setminus}\{i\}}\kappa_{ji} x_j^{k_j} {-}\kappa_{i}^+ x_i^p  \right) \frac{\partial f}{\partial x_i}(x_{[2,p]}) \pi_s(\diff x)\diff s =0,
	\end{equation}
holds for all $t{\in}[0,T]$.
\end{proposition}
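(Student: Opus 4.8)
The plan is to feed a test function $f\in\cal{C}^2({\cal K}_{\I{2,p}})$ — viewed as a compactly supported $\cal{C}^2$ function on $\R^{I^*}$ that depends only on the coordinates $x_{[2,p]}$ — into the semimartingale identity~\eqref{fSDE}, and then to divide the whole identity by $N^{1-1/p}$, the inverse of the natural timescale of the fastest coordinates that $f$ can see, namely those with $k_i{=}p$. The heuristic behind the statement is an averaging principle: on this timescale the drift of these fastest coordinates must average out to $0$ against the limiting occupation measure. Throughout I work on the event ${\cal E}_N$ of Relation~\eqref{EN}, equivalently up to the exit time from ${\cal K}_I$; by Theorem~\ref{UnTh} its probability tends to $1$, and it is precisely what keeps the a priori unbounded reaction rates under control.

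First I would kill the two non-drift contributions. Since $f$ is bounded on ${\cal K}_I$, the boundary term $(f(\overline{X}_N(t)){-}f(\overline{X}_N(0)))/N^{1-1/p}$ is $O(N^{-(1-1/p)})$ and vanishes. For the martingale, I would bound its previsible increasing process~\eqref{CfSDE}: on ${\cal E}_N$ each squared increment $\nabla_a(f)^2$ is of order $N^{-2/k_i}$ and each rate is of order $N$, so $\croc{M_{f,N}}(t)=O(N^{1-2/p})$; dividing $M_{f,N}$ by $N^{1-1/p}$ yields a bracket of order $N^{-1}$, and Doob's inequality gives $\sup_{t\le T}|M_{f,N}(t)|/N^{1-1/p}\to0$ in probability.

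The drift is the heart of the matter. A second-order Taylor expansion of each increment $\nabla_a(f)$ splits the discrete generator into a first-order part and quadratic remainders. Using $(X_i^N(s))^{(k_i)}/N\to(\overline{X}_i^N(s))^{k_i}$ uniformly on ${\cal E}_N$, the first-order part collects, after grouping creation, loss and gain, into
\[
\sum_{i\in\I{2,p}^*}k_iN^{1-1/k_i}\left(\kappa_{0i}+\sum_{j\in I^*\setminus\{i\}}\kappa_{ji}(\overline{X}_j^N(s))^{k_j}-\kappa_i^+(\overline{X}_i^N(s))^{k_i}\right)\frac{\partial f}{\partial x_i}(\overline{X}_N(s)),
\]
while each quadratic remainder carries a factor $N^{1-2/k_i}$ (cross terms $N^{1-1/k_i-1/k_j}$, smaller still). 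Dividing by $N^{1-1/p}$, every term with $k_i{<}p$ acquires a factor $N^{1/p-1/k_i}\to0$ and every quadratic remainder a factor $o(1)$, since $k_i,k_j\le p$; only the indices with $k_i{=}p$ survive, each with constant prefactor $k_i{=}p$.

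Finally I would pass to the limit along the subsequence $(N_r)$. The surviving integrand is a continuous function of $\overline{X}_N(s)$ on the compact ${\cal K}_I$, so Lemma~\ref{lemaux} identifies the limit of its time-integral with the corresponding integral against $(\pi_s)$; together with the vanishing of the boundary and martingale terms this forces
\[
\int_0^t\int_{{\cal K}_I}\sum_{i\in\I{p}^*}p\left(\kappa_{0i}+\sum_{j\in I^*\setminus\{i\}}\kappa_{ji}x_j^{k_j}-\kappa_i^+x_i^p\right)\frac{\partial f}{\partial x_i}(x_{[2,p]})\,\pi_s(\diff x)\diff s=0,
\]
and dividing by $p$ gives~\eqref{eqLimitMeas}; the process-level statement of Lemma~\ref{lemaux} yields the identity simultaneously for all $t\in[0,T]$, almost surely. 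The main obstacle is exactly the uniform control of the error terms against a priori unbounded rates, which is why the confinement of Theorem~\ref{UnTh} is indispensable — both to estimate the martingale bracket and to justify the uniform replacement of falling factorials by powers — the remaining difficulty being the bookkeeping of the exponents $1-1/k_i$ against $1-1/p$ that isolates the fastest coordinates.
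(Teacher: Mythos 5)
Your proposal is correct and follows essentially the same route as the paper: divide the semimartingale identity~\eqref{fSDE} by $N^{1-1/p}$ on the event ${\cal E}_N$, kill the boundary and martingale terms via the bracket~\eqref{CfSDE} and Doob's inequality, replace falling factorials by powers and Taylor-expand the increments so that only the indices with $k_i{=}p$ survive with prefactor $p$, and conclude with Lemma~\ref{lemaux}. The exponent bookkeeping you describe is exactly the content of Relation~\eqref{fSDE1} and the estimates following it in the paper's proof.
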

Recall the conventions  $x_{[2,p]}\hspace{0mm}{=}(x_i,i{\in}\I{2,p}^*)$ for $x{\in}(\R_+)^{I^*}$,  see Section~\ref{Not}.
\begin{proof}
 It is assumed that $\I{p}^*{\neq}\emptyset$. Let $f{\in}\cal{C}^2((\R_+)^{\I{2,p}^*})$. To simplify expressions in this proof,  we will make the slight abuse of notation, $f(x){=}f(x_{[2,p]})$ for $x{\in} (\R_+)^{I^*}$.
  
Since our goal is of characterizing the process $(\pi_t)$, by Theorem~\ref{UnTh}, without loss of generality, we can assume that the support of the function $f$ is included in ${\cal K}_I$ defined in Relation~\eqref{EN}. Similarly, from now on, all relations are considered on the event ${\cal E}_N$ whose probability is arbitrarily close to $1$ as $N$ gets large. 
In particular the process $(\overline{X}_N(t),t{\in}[0,T])$ has values in ${\cal K}_I$. 

For $t{\leq} T$, Relation~\eqref{fSDE} can be rewritten as, 
\begin{multline}\label{fSDE1}
  \frac{f\left(\overline{X}_N(t)\right)}{N^{1-1/p}}- \frac{f\left(\overline{X}_N(0)\right)}{N^{1-1/p}} - \frac{M_{f,N}(t)}{N^{1-1/p}}\\
  =\int_0^{t}\sum_{i\in \I{2,p}^*} \left(\kappa_{0i}{+}\sum_{j{\not\in}\I{2,p}}\kappa_{ji}\frac{(X_j^N(t))^{(k_j)}}{N}\right)N^{1/p}\nabla_{\frac{k_i}{N^{1/k_i}}e_i}(f) (\overline{X}_N(s)) \diff s\\
  +\int_0^{t}\sum_{i\in \I{2,p}^*}\left(\kappa_{i0}+\sum_{j{\not\in\I{2,p}}}\kappa_{ij}\right)\frac{(X_i^N(t))^{(k_i)}}{N}N^{1/p}\nabla_{-\frac{k_i}{N^{1/k_i}}e_i}(f) (\overline{X}_N(s)) \diff s\\
   +\int_0^{t}\sum_{i\in \I{2,p}^*}\sum_{j{\in\I{2,p}^*}{\setminus}\{i\}}\kappa_{ij}\frac{(X_i^N(t))^{(k_i)}}{N}N^{1/p}\nabla_{-\frac{k_i}{N^{1/k_i}}e_i+\frac{k_j}{N^{1/k_j}}e_j}(f) (\overline{X}_N(s)) \diff s.
\end{multline}
For $a$, $b{\ge}0$, there exist constants $C_0$ and $C_1$ such that
\begin{equation}\label{TechnicEqApprox}
\max_{i{\in}I^*}\sup_{x{\in}{\cal K}_I}\left|x^{k_i}{-}\frac{(\sqrt[k_i]{N}x)^{(k_i)}}{N}\right|\le \frac{C_0}{N^{1/k_i}},
\end{equation}
and, for any $i$, $j{\in}I^*$,
\begin{multline*}
\sup_{x{\in}{\cal K}_I}\left|\nabla_{-\frac{a}{N^{1/k_i}}e_i+\frac{b}{N^{1/k_j}}e_j}(f)(x){+}\frac{a}{N^{1/k_i}}\frac{\partial f}{\partial x_i}(x)
    {-}\frac{b}{N^{1/k_j}}\frac{\partial f}{\partial x_j}(x)\right|\\
    \le C_1\left(\frac{a}{N^{1/k_i}}{+}\frac{b}{N^{1/k_j}}\right).
\end{multline*}

We get that, for $i{\in}\I{2,p}$, the  processes
\[
\left(N^{1/p}\nabla_{\pm\frac{k_i}{N^{1/k_i}}e_i}(f) (\overline{X}_N(t)), t\le T \right)
\]
vanish if $k_i{\ne}p$. With   the definition~\eqref{HNTN},  Relation~\eqref{CfSDE} and Doob's Inequality give that the martingale
$(M_{f,N}(t{\wedge}T_N)/N^{1-1/p})$ converges in distribution to $0$ and so $(M_{f,N}(t)/N^{1-1/p})$ by Theorem~\ref{UnTh}.

Relation~\eqref{fSDE1} becomes 
\begin{multline*}
\int_0^{t}\sum_{i\in \I{p}^*} \left(\kappa_{0i}{+}\sum_{j{\in}I^*{\setminus}\{i\}}\kappa_{ji}\left(\overline{X}_j^N(t)\right)^{k_j}\right)p\frac{\partial f}{\partial x_i}(\overline{X}_N(s)) \diff s\\
  -\int_0^{t}\sum_{i\in \I{p}^*}\kappa_{i}^+\left(\overline{X}_i^N(t)\right)^{k_j}p\frac{\partial f}{\partial x_i}(\overline{X}_N(s)) \diff s=U_N(t),
\end{multline*}
where $(U_N(t))$ is a process converging in distribution to $0$.  This relation can be written in terms of occupation measure $\Lambda_N$, it is easy to conclude the proof of the proposition with the help of Lemma~\ref{lemaux}.
\end{proof}

\subsection{A Convex Function on ${\cal K}_I$}
\begin{definition}
If $\kappa{\in}\Omega(I)$,  the function  $F_\kappa$ is defined by, for $z{=}(z_i){\in}{\cal K}_I$,
	\begin{equation}\label{DefF}
	F_\kappa(z)\steq{def} \sum_{i\in I^*} \left(\kappa_{i}^+z_i{-}\kappa_{0i}{-} \sum_{j\in I^*{\setminus}\{i\}}\kappa_{ji}z_j\right) \ln \left(\frac{z_i}{(\ell_{\kappa,i})^{k_i}}\right),
	\end{equation}
where  ${\cal K}_I$ is defined by Relation~\eqref{EN} and $\ell_\kappa{=}(\ell_{\kappa,i}){\in}\R_+^{I^*}$ is the unique solution of the system~\eqref{system2} of Proposition~\ref{InvK}.
\end{definition}
\begin{proposition}\label{PropConvex}
The function $F_\kappa$ is non-negative, strictly convex on ${\cal K}_I$, with a unique  minimum $0$ at $z{=}((\ell_{\kappa,i})^k)$, furthermore
the mapping $(\kappa, z){\mapsto} F_\kappa(z)$ is continuous on $\Omega(I)\times{\cal K}_I$. 
\end{proposition}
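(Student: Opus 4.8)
The plan is to realise $F_\kappa$ as a relative-entropy functional attached to the generator $R_\kappa$, and to reduce the whole statement to positive-definiteness of its Hessian. Introduce the $I^*{\times}I^*$ matrix $A$ with $A_{ii}{=}\kappa_i^+$ and $A_{ij}{=}{-}\kappa_{ji}$ for $i{\ne}j$, the vector $b{=}(\kappa_{0i})$, and write $c{=}((\ell_{\kappa,i})^{k_i})$. Setting $L(z)\steq{def}Az{-}b$, the defining formula reads $F_\kappa(z){=}\sum_{i\in I^*}L_i(z)\ln(z_i/c_i)$, and Proposition~\ref{InvK} is exactly the statement $Ac{=}b$, i.e. $L(c){=}0$; in particular $F_\kappa(c){=}0$. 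Since ${\cal K}_I$ is an open box, hence convex, and lies in the positive orthant $(\R_+^*)^{I^*}$ together with $c$, strict convexity and uniqueness of the minimiser will follow once I show that the Hessian of $F_\kappa$ is positive definite on all of $(\R_+^*)^{I^*}$: then $c$ is the unique critical point and $F_\kappa(c){=}0$ forces $F_\kappa{>}0$ elsewhere, giving non-negativity as well.

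Next I would compute derivatives. One finds $\partial_k F_\kappa{=}\sum_i A_{ik}\ln(z_i/c_i){+}L_k(z)/z_k$, which vanishes at $z{=}c$, and
\[
H_{mk}=\frac{\partial^2 F_\kappa}{\partial z_m\partial z_k}=\frac{A_{mk}}{z_m}+\frac{A_{km}}{z_k}-\frac{L_k(z)}{z_k^2}\,\delta_{mk}.
\]
The crucial manoeuvre is to test this form against a vector $v$ after the change of variables $s_i{=}v_i/z_i$, which turns the symmetric part into $2\sum_{m,k}A_{mk}z_k\,s_m s_k$ and the offending $L_k$ term into $\sum_m L_m(z)\,s_m^2$. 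Using $A_{mk}z_k{=}\kappa_m^+z_m\delta_{mk}{-}\kappa_{km}z_k(1{-}\delta_{mk})$ together with the flux identity $L_m(z){=}\kappa_{m0}z_m{-}\kappa_{0m}{+}\sum_{j\in I^*\setminus\{m\}}(\kappa_{mj}z_m{-}\kappa_{jm}z_j)$ — the net current out of node $m$ when the occupation is $z$ — I expect all cross terms to reassemble into squared differences, yielding
\[
\transp{v}Hv=\sum_{\substack{a,b\in I^*\\ a\ne b}}\kappa_{ab}z_a(s_a-s_b)^2+\sum_{m\in I^*}\kappa_{m0}z_m\,s_m^2+\sum_{m\in I^*}\kappa_{0m}\,s_m^2.
\]
Producing this sum-of-squares rearrangement — the step where the non-reversibility of $R_\kappa$ is absorbed — is the main obstacle; I would verify it by grouping the contributions one flux $\kappa_{ab}z_a$ at a time (edges inside $I^*$, edges to the sink, edges from the source), and I have checked on a directed $3$-cycle that the three blocks recombine exactly as written.

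Once the identity is in hand the conclusion is immediate. Every summand is non-negative, so $H$ is positive semidefinite; and if $\transp{v}Hv{=}0$ then $s_m{=}0$ for every $m$ with $\kappa_{0m}{>}0$ and $s_a{=}s_b$ along every edge $a{\to}b$ of $I^*$ with $\kappa_{ab}{>}0$. Since $R_\kappa{\in}\Omega(I)$ is irreducible, there is a directed positive-rate path $0{\to}i_1{\to}\cdots{\to}i_\ell{=}i$ from the source to any $i\in I^*$; propagating $s_{i_1}{=}0$ along it gives $s{\equiv}0$, i.e. $v{=}0$. Hence $H$ is positive definite at every point and $F_\kappa$ is strictly convex, with unique minimum $0$ at $c$. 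Finally, for the joint continuity I would use Relation~\eqref{S22}: $((\ell_{\kappa,i})^{k_i}){=}M^R_\kappa(\kappa_{0i}/\kappa_i^+)$ with $M^R_\kappa{=}(I{-}R^*_\kappa)^{-1}$, and since $R^*_\kappa$ has spectral radius strictly less than $1$ on $\Omega(I)$, the matrix $I{-}R^*_\kappa$ is invertible and inversion is continuous, so $\kappa{\mapsto}\ell_{\kappa,i}$ is continuous (positive $k_i$-th roots). Then $F_\kappa(z)$ is a finite combination of products of functions jointly continuous on $\Omega(I){\times}{\cal K}_I$, the logarithms causing no trouble because $z_i$ and $\ell_{\kappa,i}$ stay positive, whence $(\kappa,z){\mapsto}F_\kappa(z)$ is continuous.
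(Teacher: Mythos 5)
Your proof is correct and follows essentially the same route as the paper: after computing the same Hessian, your sum-of-squares identity $v^tHv=\sum_{a\ne b}\kappa_{ab}z_a(s_a{-}s_b)^2+\sum_m(\kappa_{m0}z_m{+}\kappa_{0m})s_m^2$ is exactly the paper's expression $\sum_i(\kappa_{i0}z_i{+}\kappa_{0i})u_i^2/z_i^2+\tfrac12\sum_{i\ne j}\gamma_{ij}(u_i/z_i{-}u_j/z_j)^2$ after the substitution $s_i{=}u_i/z_i$ and the symmetrization $\gamma_{ij}{=}\kappa_{ij}z_i{+}\kappa_{ji}z_j$. The only difference is that you spell out the irreducibility/path argument showing the quadratic form vanishes only at $v{=}0$, a step the paper merely asserts.
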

\begin{proof}
The existence and uniqueness of $\ell_\kappa$, solution of a non-singular linear system, has been seen in Proposition~\ref{InvK}. The continuity of $\kappa{\mapsto}\ell_\kappa$ on $\Omega(I)$ gives the continuity of $(\kappa,z){\mapsto}F_{\kappa}(z)$.

We now calculate the Hessian matrix of $F_\kappa$. For $i{\in}I^*$, we have, for $z{\in}{\cal K}_I$, 
\begin{multline*}
\frac{\partial F_\kappa}{\partial z_i} (z)= \kappa_{i}^+\ln\left(\frac{z_i}{(\ell_{\kappa,i})^{k_i}}\right) +\frac{1}{z_i}\left(\kappa_{i}^+z_i- \kappa_{0i}{-}\sum_{m\in I^*\setminus\{i\}}\kappa_{mi}z_m\right)\\{-}\sum_{m\in I^*\setminus\{i\}}\kappa_{im} \ln\left(\frac{z_m}{(\ell_{\kappa,m})^{k_m}}\right).
\end{multline*}
Relation~\eqref{system2} gives that this quantity is indeed null at $z{=}((\ell_{\kappa,m})^{k_m})$. For $j{\in}I^*$, $j{\neq}i$, we have the relation 
	\[
			\frac{\partial^2 F_\kappa}{(\partial z_i)^2} (z)=\frac{1}{z_i^2} \left(\kappa_{i}^+ z_i {+}\kappa_{0i}{+}\hspace{-4mm}\sum_{m\in I^*\setminus\{i\}}z_m\kappa_{mi}\right), \quad 
		\frac{\partial^2 F_\kappa}{\partial z_i\partial z_j} (z)=-\frac{\kappa_{ij} z_i+\kappa_{ji}z_j}{z_iz_j}.
	\]
Let ${\cal H}_\kappa(z)$ be the Hessian matrix of $F_\kappa$ at $z{\in}{\cal K}_I$. 
For $u{=}(u_i){\in}\R^{I^*}$, with the notation $\gamma_{ij}{=}\kappa_{ij}z_i{+}\kappa_{ji}z_j$ for $i{\ne}j$, the associated quadratic form at $u$ is  given by
	\begin{align*}
		u^t F_\kappa(z)u&= -\sum_{i\in I^*} \sum_{j\in I^*\setminus \{i\}} \gamma_{ij} \frac{u_iu_j}{z_iz_j} +\sum_{i\in I^*} (\kappa_{i0}z_i{+}\kappa_{0i})\frac{u_i^2}{z_i^2}+\sum_{i\in I^*} \sum_{j\in I^*\setminus\{i\}} \gamma_{ij} \frac{u_i^2}{z_i^2}\\
		&=\sum_{i\in I^*} (\kappa_{i0}z_i{+}\kappa_{0i})\frac{u_i^2}{z_i^2}+\frac{1}{2}\sum_{i\in I^*} \sum_{j\in I^*\setminus\{i\}} \gamma_{ij} \left(\frac{u_i}{z_i}{-}\frac{u_j}{z_j}\right)^2.
	\end{align*}
This last expression is positive for any non-zero element $u{=}(u_i){\in}\R^{I^*}$. The function $F_\kappa$ is strictly convex. This concludes the proof of the proposition.
\end{proof}

\subsection{Identification of the Limit}
We can now state the main convergence result of this section.
\begin{theorem}\label{Thk2}
  If $\kappa{\in}\Omega(I)$ and the subset  $\I{1}^*$ is empty, if Relation~\eqref{definitiond} holds for the initial conditions,   then the sequence  $(\Lambda_N)$ is converging in distribution to $\Lambda_\infty$, such that, almost surely, for any function $f{\in}\cal{C}_c(\R_+{\times}(\R_+^*)^{I^*})$, the relation 
 \begin{equation}\label{Thk2equation}
		\int f(s, x)\Lambda_\infty(\diff s, \diff x)= \int_0^{+\infty}f(s, \ell_\kappa)\diff s, 
	\end{equation}
holds,  where  $\ell_{\kappa}{=}(\ell_{\kappa,i})$ is the unique solution of the system~\eqref{system2} of Proposition~\ref{InvK}.
\end{theorem}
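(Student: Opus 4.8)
The plan is to assemble the three facts already established for this homogeneous setting and read off the conclusion by convexity. By Proposition~\ref{PropTightkgeq2} any subsequential limit has the form $\croc{\Lambda_\infty,f}=\int_0^T\int f(s,x)\,\pi_s(\diff x)\,\diff s$ with $(\pi_s)$ an optional family of probability measures supported in $\cal{K}_I$; Proposition~\ref{propEqLimit} provides the functional equation~\eqref{eqLimitMeas} at every timescale level $p\geq 2$; and Proposition~\ref{PropConvex} supplies the non-negative, strictly convex function $F_\kappa$ whose unique zero on $\cal{K}_I$ is $z^\star=((\ell_{\kappa,i})^{k_i})$. The statement~\eqref{Thk2equation} is then equivalent to showing that $\pi_s=\delta_{\ell_\kappa}$ for almost every $s$.

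The first step is to feed into~\eqref{eqLimitMeas}, for each $p\in\{2,\dots,k_{\max}\}$ with $\I{p}^*\neq\emptyset$, the test function $f_p(x)=\sum_{i\in\I{p}^*}\phi_i(x_i)$, where $\phi_i$ is a primitive of $x_i\mapsto \ln\!\big(x_i^{k_i}/(\ell_{\kappa,i})^{k_i}\big)$. This is an admissible $\cal{C}^2$ function on $\cal{K}_{\I{2,p}}$ since, on that compact set, every coordinate is bounded away from $0$. Writing $z_j=x_j^{k_j}$ and $z_j^\star=(\ell_{\kappa,j})^{k_j}$, one has $\partial f_p/\partial x_i=\ln(z_i/z_i^\star)$ for $i\in\I{p}^*$, so the integrand $\sum_{i\in\I{p}^*}\big(\kappa_{0i}+\sum_{j\neq i}\kappa_{ji}z_j-\kappa_i^+ z_i\big)\,\partial_{x_i}f_p$ equals, up to sign, exactly the block of the defining sum~\eqref{DefF} of $F_\kappa$ indexed by level $p$. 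Thus~\eqref{eqLimitMeas} reads $\int_0^t\int_{\cal{K}_I}\sum_{i\in\I{p}^*}\big(\kappa_i^+ z_i-\kappa_{0i}-\sum_{j\neq i}\kappa_{ji}z_j\big)\ln(z_i/z_i^\star)\,\pi_s(\diff x)\,\diff s=0$.

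Since the sets $\I{p}^*$ partition $I^*$, summing these identities over all $p$ telescopes precisely into~\eqref{DefF}, giving, almost surely and for every $t\in[0,T]$,
\[
\int_0^t\int_{\cal{K}_I}F_\kappa(z(x))\,\pi_s(\diff x)\,\diff s=0,\qquad z(x)=(x_i^{k_i}).
\]
As $F_\kappa\geq 0$ on $\cal{K}_I$ by Proposition~\ref{PropConvex}, this non-negative integrand must vanish $\pi_s(\diff x)\,\diff s$-almost everywhere; strict convexity, with the unique zero at $z^\star$, then forces $z(x)=z^\star$, i.e. $x=\ell_\kappa$, for $\pi_s$-almost every $x$ and almost every $s$. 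Hence $\pi_s=\delta_{\ell_\kappa}$, the representation~\eqref{ConvKurtzkgeq2} collapses to~\eqref{Thk2equation}, and since every subsequential limit is identified with the same deterministic measure the full sequence $(\Lambda_N)$ converges to it.

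The point I expect to be most delicate is conceptual rather than computational. The individual level-$p$ identities produced in the second step are \emph{not} sign-definite, so no single timescale can be resolved on its own: only after running through the entire hierarchy and summing does the globally non-negative entropy $F_\kappa$ reappear, at which stage convexity does the work. The bookkeeping to check is that the telescoping is exact---that the cross terms $\kappa_{ji}z_j$ match the off-diagonal part of~\eqref{DefF} block by block---and that $z^\star\in\cal{K}_I$, so that the unique minimiser lies within the support of $\pi_s$; the passage from ``$\int_0^t(\cdots)\,\diff s=0$ for all $t$'' to ``a.e.\ $s$'' is routine given the optional property of $(\pi_s)$.
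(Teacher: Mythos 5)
Your proof is correct, and it takes a genuinely different route from the paper's. You feed into the level-$p$ identity~\eqref{eqLimitMeas} the logarithmic test function centred at the \emph{global} equilibrium $\ell_\kappa$ and then sum over $p$; since the sets $\I{p}^*$ partition $I^*$ and $x_i^p{=}x_i^{k_i}$ for $i{\in}\I{p}^*$, the summed integrand is exactly $-F_\kappa((x_i^{k_i}))$ from~\eqref{DefF}, and one application of Proposition~\ref{PropConvex} collapses $\pi_s$ to $\delta_{\ell_\kappa}$. The telescoping does check out block by block, and the point you flag about $z^\star{\in}{\cal K}_I$ is harmless: the Hessian computation in Proposition~\ref{PropConvex} is valid on all of $(\R_+^*)^{I^*}$ with a critical point at $((\ell_{\kappa,i})^{k_i})$, so $F_\kappa{\ge}0$ with unique zero at $z^\star$ holds on the whole support of $\pi_s$, and the identity forces the membership a posteriori. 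The paper instead argues by induction on the hierarchy of timescales: it centres the entropy at the \emph{conditional} equilibrium $\widetilde{L}_{1}(y)$ of the fastest block given the slower coordinates $y$, deduces that the conditional law $\pi_s^{[p_1]}(\diff z|y)$ is a Dirac mass, and recurses with a reduced matrix $\overline{\kappa}^2{\in}\Omega(\I{2,p_2})$. Your argument is shorter and avoids conditional distributions entirely, at the price of each individual level being sign-indefinite until the final summation, as you note. What the paper's heavier machinery buys is reusability: the conditional maps $\widetilde{L}$ and the reduced matrices are precisely what is needed in Section~\ref{1GenSec}, where the slow coordinates are time-dependent, the equilibrium of the fast block is not known a priori, and the fast species must be resolved conditionally; they also produce the explicit reduced dynamics ($\overline{\kappa}^4$) of the concluding remark. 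For Theorem~\ref{Thk2} itself, where the target $\ell_\kappa$ is available from Proposition~\ref{InvK} in advance, your one-shot global entropy argument is a legitimate and cleaner alternative.
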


The proof is carried out by induction on the ``speed'' of the different processes. We start by the identification of the fastest species, with the largest $k_i$, and identify step by step each set $\I{p}^*$. One of the difficulties is that we  have only the functional equation, Relation~\eqref{eqLimitMeas}, to identify all the species in the set $\I{p}^*$ for each $p\geq 2$. A convex function, related to a relative entropy functional,  will be used to identify them simultaneously.

\begin{proof}
 Let $m_0{\ge}1$ and $(p_a){\in}\N^m$ such that $2{\le}p_{m_0}{<}{\cdots}{<}p_2{<}p_1$ and 
  \[
  \{k_i,i{\in}I^*\}=\{p_a,a{=}1,\ldots, m_0\},
  \]
  in particular, we have 
  \[
  I^*=\bigcup_{a=1}^{m_0}\I{p_a}^* \text{ and } I{=}\I{2,p_{1}}. 
  \]
We will proceed by induction on $m_0$ to prove that a random measure  $\Lambda_\infty$ that verifies Relation~\eqref{eqLimitMeas} is expressed by Relation~\eqref{Thk2equation}.

We first consider the species of the set  $\I{p_1}^*$ associated to the fastest processes of $(X_N(t))$. 
With the notations of Relation~\eqref{EN}, Relation~\eqref{eqLimitMeas}  gives, for $T{>}0$ and $p_1$, the identity
\begin{equation}\label{weq1}
\int_0^{T} \int_{{\cal K}_I} \sum_{i\in \I{p_1}^*} K_i\left[x_{[2,p_2]}\right](x_{[p_1]}{}^{p_1}) \frac{\partial f}{\partial x_i}(x) \pi_s(\diff x)\diff s=0
\end{equation}
holds almost surely for $f{\in}\cal{C}^2({\cal K}_I)$, with, for $y{\in}{\cal K}_{\I{2,p_2}}$,  $z{\in}{\cal K}_{\I{p_1}}$ and $i{\in}\I{p_1}^*$, 
\[
K_i[y](z)\steq{def} \kappa_{0i}+\sum_{j\in \I{2,p_2}^*} y_j^{k_j}\kappa_{ji} +\sum_{j\in \I{p_1}^*\setminus\{i\}}z_j\kappa_{ji} -\kappa_{i}^+ z_i.
\]
 and the notation $z^{p_1}{=}(z_i{}^{p_1})$. 

For $y{\in}{\cal K}_{\I{2,p_2}}$, we introduce an  $\I{p_1}{\times}\I{p_1}$ matrix $\overline{\kappa}^1(y)$ as follows:  For $i$, $j{\in}\I{p_1}^*$, $j{\ne}i$,   $\overline{\kappa}^1_{ij}(y){=}\kappa_{ij}$ and 
\[
\overline{\kappa}^1_{0i}(y)= \kappa_{0i}+\sum_{j\in \I{2,p_2}^*} y_j^{k_j}\kappa_{ji}, \quad 
\overline{\kappa}^1_{i0}(y)=\kappa_{i0}+\sum_{j\in \I{2,p_2}^*} \kappa_{ij}.
\]
Remark that, for $i{\in}\I{p_1}$,
\[
\overline{\kappa}^{1,+}_i(y)=\sum_{j{\in}\I{p_1}\setminus\{i\}}\overline{\kappa}^1_{ij}(y)=\kappa_1^+.
\]
It is easily seen that $\overline{\kappa}^1{\in}\Omega(\I{p_1})$ and 
\[
K_i[y](z)=\overline{\kappa}_{0i}^1(y) +\sum_{j\in \I{p_1}^*\setminus\{i\}}z_j\overline{\kappa}_{ji}^1(y) -\overline{\kappa}_{i}^{1,+}(y) z_i.
\]
Note that if $\I{2, p_2}^*$ is empty, then $\overline{\kappa}^1$ is then constant, there is no dependence on $y$ of course, and Theorem~\ref{Thk2} is proved for $m_0=1$. 

Now if $\I{2,p_2}^*$ is not empty, for  $y{\in}{\cal K}_{\I{2,p_2}}$, the equation $$K_1[y](z^p){=}0$$ is  the system~\eqref{system2} of Proposition~\ref{InvK} for the set of indices $\I{p_1}$ and the matrix $\overline{\kappa}^1(y)$. It has a unique solution  $z{=}\widetilde{L}_{1}[y]{=}(L^{1}_i(y),i{\in}\I{p_1}^*)$. 
We now define an \emph{entropy function} $H_1$   given by, for $y{\in}{\cal K}_{\I{2,p_2}}$ and $z{\in}{\cal K}_{\I{p_1}}$, 
\begin{equation}\label{defV1}
		H_1[y](z)= \sum_{i\in \I{p_1}^*} z_i\ln\left( \frac{z_i}{L^{1}_i(y)^{p_1}}\right){-}z_i.
\end{equation}
Note that  $H_1[y]$ is a $\cal{C}^2${-}function on ${\cal K}_I$.
It is easily checked that  Relation~\eqref{weq1} for the function $f{:}x{\mapsto}H_1[x_{[2,p_2]}](x_{[p_1]})$ can be rewritten as \begin{equation}\label{xeq1}
\int_0^{T} \int_{{\cal K}_I}   F_1[x_{[2,p_2]}](x_{[p_1]}{}^{p_1})\pi_s(\diff x)\diff s=0,
\end{equation}
where, for $z{\in}{\cal K}_{\I{p_1}}$, 
\[
F_1[y](z)\steq{def} \sum_{i\in \I{p_1}^*} \left(z_i\overline{\kappa}_{i}^{1,+}(y)-\overline{\kappa}^1_{0i}(y)-\sum_{j\in \I{p_1}^*\setminus\{i\}} z_j\overline{\kappa}_{ji}(y)\right) \ln \left(\frac{z_i}{L^1_{i}(y)^{p_1}}\right).
\]
Note that, for  $y{\in}{\cal K}_{\I{2,p_2}}$,  $F_1[y]$ is the function $F_{\overline{\kappa}^1(y)}$ of Relation~\eqref{DefF} for the set of indices $\I{p_1}$. 
Relation~\eqref{xeq1} gives therefore that, almost surely, 
\[
\int_0^{T} \int_{y{\in}{\cal K}_{\I{2,p_2}}} \left(\int_{z{\in}{\cal K}_{\I{p_1}}}F_1[y](z^{p_1})\pi_s^{[p_1]}(\diff z|y)\right)\diff s{\otimes}\pi_s^{[2,p_2]}(\diff y)=0,
\]
with the notations of Section~\ref{Not} and, for $s{\ge}0$,  $\pi_s^{[p_1]}(\diff z|y)$ is the conditional distribution on ${\cal K}_{\I{p_1}}$ of $\pi_s{\in}{\cal P}(\R_+^{I^*})$ with respect to $y{\in}{\cal K}_{\I{2,p_2}}$. 
Consequently, since $F_1[y]$ is non-negative, up to a negligible set of $[0,T]{\times}{\cal K}_{\I{2,p_2}}$ for the measure  $\diff s{\otimes}\pi_s^{[2,p_2]}(\diff y)$, we have the relation 
\[
\int_{{\cal K}_{\I{p_1}}}F_1[y](z^{p_1})\pi_s^{[p_1]}(\diff z|y)=0.
\]
Proposition~\ref{PropConvex} gives that $\widetilde{L}_{1}(y)$ is the only root of the function $x{\mapsto}F_1[y](x^{p_1})$ on ${\cal K}_{\I{p_1}}$, hence 
the probability distribution $\pi_s^{[p_1]}(\diff x|y)$ is the Dirac measure at $\widetilde{L}_{1}(y)$. 

If  $h$, $f_1$ and $f_2$, are continuous functions on, respectively, $[0,T]$, ${\cal K}_{\I{2, p_2}}$ and ${\cal K}_{\I{p_1}}$ then,  almost surely, 
\begin{align*}
\int_0^{T} \int_{{\cal K}_{I}}&h(s)f_1(x_{[2,p_2]})f_2(x_{[p_1]})\pi_s(\diff x)\diff s\\=&\int_0^{T}\int_{y{\in}{\cal K}_{\I{2,p_2}}}f_1(y) \int_{z{\in}{\cal K}_{\I{p_1}}}h(s)f_2(z)\pi_s^{[p_1]}(\diff z|y)\pi_s^{[2,p_2]}(\diff y)\diff s\\
=& \int_0^{T}\int_{y{\in}{\cal K}_{\I{2,p_2}}}h(s)f_1(y)f_2(\widetilde{L}_{1}(y))\pi_s^{[2,p_2]}(\diff y)\diff s.
\end{align*}
We get therefore that for $f{\in}{\cal C}_c([0,T]{\times}{\cal K}_I)$, almost surely,
\begin{equation}\label{a1eq}
\int_0^T \int_{{\cal K}_I}f(s,x)\pi_s(\diff x)\diff s  =\int_0^T \int_{{\cal K}_{\I{2,p_2}}}f\left(s,(y,\widetilde{L}_{1}(y))\right)\pi_s^{[2,p_2]}(\diff y)\diff s,
\end{equation}
with the slight abuse of notation of writing $x{=}(x_{[2,p_2]},x_{[p_1]})$ for $x{\in}\R_+^{I^*}$. 

We can now use our induction  assumption to identify the measure $\diff s\otimes \pi_s^{[2,p_2]}(\diff y)$. 
To do so, we have to show that a set of equations as in Relation~\eqref{eqLimitMeas} for $\pi_s^{[2,p_2]}$ and an appropriate $\overline{\kappa}^2$. 

If we can find some $\overline{\kappa}^2\in \Omega(\I{2, p_2})$ depending only on the initial $\kappa$ such that for all $y\in {\cal K}_{\I{2,p_2}}$, for all $i\in \I{2, p_2}^*$, 
\begin{multline}\label{FoudKappa2}
	\kappa_{0i}+\sum_{j\in \I{p_1}^*} \kappa_{ji} (L^{1}_i(y))^{p_1} +\sum_{j\in \I{2,p_2}^*\setminus\{i\}} \kappa_{ji} y_j^{k_j} -\kappa_{i}^+y_i^{k_i} \\
		=\overline{\kappa}^{2}_{0i} +\sum_{j\in \I{2,p_2}^*\setminus\{i\}} \overline{\kappa}^{2}_{ji} y_j^{k_j} -\overline{\kappa}^{2,+}_{i} y_i^{k_i}.
\end{multline}
Applying Relation~\eqref{a1eq} in Relation~\eqref{eqLimitMeas}, for any $2{\leq} p{\leq}p_2$, for any $f{\in}{\cal C}^2\left({\cal K}_{\I{2,p}}\right)$, almost surely, we have that  the relation 
\[
	\int_0^t \int_{{\cal K}_{\I{2,p_2}}}\sum_{i\in \I{p}^*} \left(\overline{\kappa}^{2}_{0i}{+}\sum_{j\in I^*{\setminus}\{i\}}\overline{\kappa}^{2}_{ji} x_j^{k_j} {-}\overline{\kappa}^{2,+}_i x_i^p  \right) \frac{\partial f}{\partial x_i}(x_{[2,p]}) \pi^{[2, p_2]}_s(\diff x)\diff s =0,  
\]
holds for $t\in[0, T]$. We recognize here the Relations of Proposition~\ref{propEqLimit}, for the set of indices $\I{2,p_2}$ and the matrix $\overline{\kappa}_{2}{\in} \Omega(\I{2,p_2})$. We can apply the induction hypothesis on the measure $\pi^{[2, p_2]}$. 
Setting $\widetilde{L}_{2}$ the unique solution of the system~\eqref{system2} of Proposition~\ref{InvK} for the set of indices $\I{2, p_2}$ and the matrix $\overline{\kappa}^2$, Relation \eqref{a1eq} can be rewritten as : for $f{\in}{\cal C}_c([0,T]{\times}{\cal K}_I)$, almost surely,
\begin{equation}
\int_0^T \int_{{\cal K}_I}f(s,x)\pi_s(\diff x)\diff s  =\int_0^T \int_{{\cal K}_{\I{2,p_2}}}f\left(s,(\widetilde{L}_2,\widetilde{L}_{1}(\widetilde{L}_2))\right)\diff s,
\end{equation}
with the slight abuse of notation of writing $x{=}(x_{[2,p_2]},x_{[p_1]})$ for $x{\in}\R_+^{I^*}$. 

We conclude the induction by checking that
\[
(\widetilde{L}_2,\widetilde{L}_{1}(\widetilde{L}_2))=\ell_{\kappa},
\]
where $\ell_{\kappa}{=}(\ell_{\kappa,i})$ is the unique solution of the system~\eqref{system2} of Proposition~\ref{InvK}.

For the existence of $\overline{\kappa}^2$ that verifies Relation~\eqref{FoudKappa2}. It is done by induction on the number of elements of the set $\I{p_1}^*$. If this set contains only one index $i_0$, setting $\overline{\kappa}^{i_0}$ such that  for $i$, $j{\in}\I{1, p_2}$, $j{\ne}i$,   
\begin{equation}\label{eqDefKappa0}
\overline{\kappa}^{i_0}_{ij}= \kappa_{ij}+\frac{\kappa_{ii_0}\kappa_{i_0j}}{\kappa_{i_0}^+}, 
\end{equation}
is suitable. Otherwise, if $\I{p_1}^*$ contains more than one element, we remove them, one by one, by applying the transformation of Relation~\eqref{eqDefKappa0}.

The theorem is proved.

\end{proof}
\section{The General Case}\label{1GenSec}
We can now conclude the proof of Theorem~\ref{Theorem}. The difference with Section~\ref{FastSec} is the time-inhomogeneity of the limiting quantities. 
\begin{proposition}\label{PropTightGen}
  If the initial conditions satisfy Relation~\eqref{CondIni} then the sequence of processes $((X^N_{[1]}(t)),\Lambda_N)$, defined by Relations~\eqref{ScaledProcess}    and~\eqref{OccMeas},  is tight for the convergence in distribution.  Any limiting point $((x(t)),\Lambda_\infty)$ is such that
  \begin{enumerate}
  \item Almost surely, $(x(t))$ is a continuous process with values in ${\cal K}_{\I{1}}$;
  \item For any function $f{\in}{\cal C}_c([0, T]{\times}(\R_+)^{I^*})$,
\begin{equation}\label{ConvKurtzkgeq3}
\croc{\Lambda_\infty, f}=\int_{[0, T]\times {\cal K}_{\I{2+}}}f\left(s, \left(x(s),y\right)\right) \pi^{[2+]}_s(\diff y) \diff s, 
\end{equation}	
where $(\pi^{[2+]}_s)$ is an optional process with values in  ${\cal P}({\cal K}_{\I{2+}})$.
  \end{enumerate}
\end{proposition}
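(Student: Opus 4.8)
The plan is to reuse the scheme of Proposition~\ref{PropTightkgeq2}, the only genuinely new ingredients being the separate treatment of the slow coordinates $(\overline{X}^N_i(t),i{\in}\I{1}^*)$ for the uniform topology and the verification that the slow marginal of the limiting occupation measure is carried by their trajectory, which is what produces the disintegration~\eqref{ConvKurtzkgeq3}.

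First I would establish the tightness of the full occupation measure $(\Lambda_N)$ on $[0,T]{\times}{\cal K}_I$ exactly as in Proposition~\ref{PropTightkgeq2}: Theorem~\ref{UnTh} gives $\Lambda_N([0,T]{\times}{\cal K}_I){\geq}T\ind{{\cal E}_N}$, hence $\E(\Lambda_N([0,T]{\times}{\cal K}_I))$ converges to $T$, and Lemmas~1.3 and~1.4 of~\citet{Kurtz1992} yield tightness together with a representation $\croc{\Lambda_\infty,f}{=}\int_0^T\int_{{\cal K}_I}f(s,x)\pi_s(\diff x)\diff s$ for an optional process $(\pi_s)$ with values in ${\cal P}({\cal K}_I)$.

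Next I would prove the tightness of the slow vector $(\overline{X}^N_i(t),i{\in}\I{1}^*)$ for the uniform topology. For $i{\in}\I{1}^*$ one has $k_i{=}1$, so $(X_i^N)^{(k_i)}{=}X_i^N$ and the scaled equation~\eqref{SDE} has drift
\[
\kappa_{0i}+\sum_{j\in I^*\setminus\{i\}}\kappa_{ji}\frac{(X^N_j(s))^{(k_j)}}{N}-\kappa_i^+\overline{X}^N_i(s).
\]
On the event ${\cal E}_N$ of Theorem~\ref{UnTh} every term $(X^N_j(s))^{(k_j)}/N$ lies in $(m_j,M_j)$ and $\overline{X}^N_i(s)$ is bounded, so this drift is uniformly bounded. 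The previsible increasing process of the associated martingale is of order $1/N$, because the jumps of $\overline{X}^N_i$ have size $1/N$ while the rates are of order $N$; by Doob's inequality this martingale vanishes, exactly as in Proposition~\ref{EhrProp}. The modulus-of-continuity criterion (Theorem~7.3 of~\citet{Billingsley}) then gives tightness. Since the jump sizes $1/N$ tend to $0$, every limit point $(x(t))$ is continuous, and by Theorem~\ref{UnTh} it takes values in (the closure of) ${\cal K}_{\I{1}}$, which is assertion~(a). Tightness of the pair $((\overline{X}^N_i(t),i{\in}\I{1}^*),\Lambda_N)$ follows from the tightness of each component.

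Finally I would identify the slow marginal of $\pi_s$. Fix a limit point $((x(t)),\Lambda_\infty)$ along a subsequence and write $\pi^{[1]}_s$ for the marginal of $\pi_s$ on ${\cal K}_{\I{1}}$. Testing $(\Lambda_N)$ against $g(s,x){=}h(s)\psi(x_{[1]})$ with $h$, $\psi$ continuous gives $\croc{\Lambda_N,g}{=}\int_0^T h(s)\psi(\overline{X}^N_{[1]}(s))\diff s$; since $\overline{X}^N_{[1]}(s)$ converges to $x(s)$ uniformly on $[0,T]$ this integral converges to $\int_0^T h(s)\psi(x(s))\diff s$, while the representation of $\Lambda_\infty$ forces the same integral to converge to $\int_0^T h(s)\croc{\pi^{[1]}_s,\psi}\diff s$. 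As this holds for all $h$, $\psi$, one obtains $\pi^{[1]}_s{=}\delta_{x(s)}$ for almost every $s$, and disintegrating $\pi_s{=}\delta_{x(s)}\otimes\pi^{[2+]}_s$ yields the representation~\eqref{ConvKurtzkgeq3}. The delicate point is precisely this last step: the occupation measure converges only in distribution whereas the slow vector converges in the uniform topology, and it is the uniform convergence of $(\overline{X}^N_{[1]}(s))$ that allows one to freeze the slow coordinates at $x(s)$ inside the time integral and thereby couple the two limits; making this substitution rigorous on the event ${\cal E}_N$, where all quantities stay in the compact ${\cal K}_I$, is the main technical obstacle.
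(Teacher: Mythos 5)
Your proposal is correct and follows essentially the same route as the paper: tightness of $(\Lambda_N)$ exactly as in Proposition~\ref{PropTightkgeq2} via Theorem~\ref{UnTh} and Lemmas~1.3--1.4 of \citet{Kurtz1992}, and tightness of the slow coordinates via the modulus-of-continuity criterion, using the uniform bound on the drift on ${\cal E}_N$ and Doob's inequality to kill the martingale. The only difference is one of bookkeeping: the disintegration $\pi_s{=}\delta_{x(s)}\otimes\pi_s^{[2+]}$, which you establish with product test functions and the uniform convergence of the slow vector, is exactly the content of Relation~\eqref{Heq}, which the paper only makes explicit in the proof of the subsequent Proposition~\ref{propEqLimitGeneral}.
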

Recall the convention of writing an element $x$ of $(\R_+)^{I^*}$ as $x{=}(x_{[1]},x_{[2+]})$. See Section~\ref{Not}. 
\begin{proof}
  The tightness of the occupation measures is shown exactly as in the proof of Proposition~\ref{PropTightkgeq2}. Definition~\eqref{HNTN}, Theorem~\ref{UnTh} shows that the tightness of $(\overline{X}_N(t{\wedge}T_N))$ gives the tightness of the sequence of processes  $(\overline{X}_N(t))$. It  is established via the criterion of the modulus of continuity. See Theorem~7.3 of~\citet{Billingsley}.

For $i\in \I{1}^*$,  $\delta{>}0$, Relation~\eqref{SDE} gives the relation
\begin{multline*}
w_i^N(\delta)\steq{def} \sup_{\substack{s,t\le T{\wedge}T_N\\|s-t|\le \delta}} \left|\overline{X}_i^N(t){-}\overline{X}_i^N(s)\right|\le \kappa_{0i}\delta{+}2\sup_{t\le T{\wedge}T_N} |\overline{M}_N(t)|\\+\sum_{j\in I^*{\setminus}\{i\}}  \kappa_{ji}\int_s^t \overline{X}_j^N(u)^{(k_j)}\diff u
+\sum_{j\in I{\setminus}\{i\}} \kappa_{ij}\int_s^t \overline{X}_i^N(u)\diff u,
\end{multline*}
where $(\overline{M}_N(t{\wedge}T_N))$ is a martingale whose previsible increasing process at time $T$ is
\[
\frac{ k_i^2}{N} \left(\kappa_{0i}T{\wedge}T_N{+}\hspace{-3mm}\sum_{j\in I^*{\setminus}\{i\}} \kappa_{ji}\int_0^{T{\wedge}T_N} \overline{X}_j^N(u)^{(k_j)}\diff u
{+}\hspace{-3mm}\sum_{j\in I{\setminus}\{i\}}\kappa_{ij}\int_0^{T{\wedge}T_N}\overline{X}_i^N(u)^{(k_i)}\diff u\right).
\]
The expected value of this quantity on the event ${\cal E_N}$ converge to $0$, by Doob's Inequality and Theorem~\ref{UnTh}, the martingale $(\overline{M}_N(t{\wedge}T_N))$ converges in distribution to $0$. The proposition is proved. 
\end{proof}

\begin{proposition}\label{propEqLimitGeneral}
  If   $((x(t)),\Lambda_\infty)$ is a limiting point of  $((\overline{X}_N(t)),\Lambda_N)$ with the representation~\eqref{ConvKurtzkgeq3}, then for $p\geq 2$, for $f{\in}\cal{C}^2((\R_+^*)^{\I{2,p}^*})$, almost surely, for all $t{\in}[0,T]$,  the relation
\begin{multline}\label{eqLimitMeasGeneral}
                \int_0^t \int_{{\cal K}_{\I{2+}^*}}\sum_{i\in \I{p}^*} \left(\sum_{j\in \I{1}^*}\kappa_{ji}x_{j}(s)+\kappa_{0i}+\sum_{\substack{j\in \I{2+}^*\setminus\{i\}}}\kappa_{ji} y_j^{k_j} -\kappa_{i}^+ y_i^p  \right)\\
                \frac{\partial f}{\partial x_i}(y_{[2,p]}) \pi_s^{[2+]}(\diff y)\diff s =0. 
        \end{multline}
        holds. 
\end{proposition}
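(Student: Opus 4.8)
The plan is to follow the proof of Proposition~\ref{propEqLimit} almost verbatim, the only genuinely new ingredient being the contribution of the slow species $j{\in}\I{1}^*$, which now enters as time-dependent coefficients through the limiting process $(x_j(s))$. Fix $p{\ge}2$ with $\I{p}^*{\ne}\emptyset$ (the case $\I{p}^*{=}\emptyset$ being vacuous) and $f{\in}\cal{C}^2((\R_+^*)^{\I{2,p}^*})$. As in the proof of Proposition~\ref{propEqLimit}, by Theorem~\ref{UnTh} one may assume that the support of $f$ is contained in $\cal{K}_I$ and work on the event $\cal{E}_N$ of Relation~\eqref{EN}, on which $(\overline{X}_N(t))$ stays in the compact $\cal{K}_I$. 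Applying the SDE~\eqref{SDE} to $f(\overline{X}_N(t))$ gives the analogue of Relation~\eqref{fSDE}, which I would then rescale by $N^{1-1/p}$ exactly as in Relation~\eqref{fSDE1}. Since $f$ is bounded and $p{\ge}2$, the boundary terms $f(\overline{X}_N(t))/N^{1-1/p}$ and $f(\overline{X}_N(0))/N^{1-1/p}$ vanish as $N{\to}\infty$; and the martingale $(M_{f,N}(t)/N^{1-1/p})$ converges in distribution to $0$ by the estimate~\eqref{CfSDE} on its previsible increasing process, Doob's inequality and Theorem~\ref{UnTh}, just as before.

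The heart of the argument is the same scaling bookkeeping. Using the approximations~\eqref{TechnicEqApprox} to replace discrete gradients by derivatives and $(X_i^N)^{(k_i)}/N$ by $(\overline{X}_i^N)^{k_i}$, every generator term carrying the derivative $\partial f/\partial x_i$ acquires a factor $N^{1/p-1/k_i}$. This factor tends to $0$ when $k_i{<}p$ and equals $1$ when $k_i{=}p$, so that in the limit only the derivatives with $i{\in}\I{p}^*$ survive, and the surviving terms carry a common factor $p{>}0$ which may be dropped. The novelty lies in the creation terms feeding a fast species $i{\in}\I{p}^*$ from a slow species $j{\in}\I{1}^*$: the reaction $S_j{\to}k_iS_i$ has rate $\kappa_{ji}(X_j^N)^{(k_j)}{=}\kappa_{ji}X_j^N$ since $k_j{=}1$, and writing $X_j^N{=}N\overline{X}_j^N$ shows that, after rescaling, this contributes $\kappa_{ji}\overline{X}_j^N\,\partial f/\partial x_i$, i.e.\ precisely the term $\kappa_{ji}x_j(s)$ of Relation~\eqref{eqLimitMeasGeneral} once $\overline{X}_j^N(s){\to}x_j(s)$. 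The creation terms from fast species $j{\in}\I{2+}^*$ and the destruction terms $-\kappa_i^+(\overline{X}_i^N)^{k_i}$ are handled exactly as in Proposition~\ref{propEqLimit}, and collecting the coefficients of each $\partial f/\partial x_i$, $i{\in}\I{p}^*$, yields the integrand of~\eqref{eqLimitMeasGeneral}.

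It remains to pass to the limit in the resulting time-integral. I would use the version of Lemma~\ref{lemaux} adapted to the representation~\eqref{ConvKurtzkgeq3}: for a continuous function $g$ on $\cal{K}_I$, the joint convergence of $((\overline{X}^N_{[1]}(t)),\Lambda_N)$ established in Proposition~\ref{PropTightGen}, together with the uniform bound of Theorem~\ref{UnTh}, gives
\[
\int_0^t g(\overline{X}_N(s))\diff s \longrightarrow \int_0^t\int_{\cal{K}_{\I{2+}}} g\big((x(s),y)\big)\,\pi_s^{[2+]}(\diff y)\diff s,
\]
for the convergence in distribution. Applying this to the integrand obtained above, in which the slow coordinates enter through the uniformly convergent process $(\overline{X}^N_{[1]}(s))$ and the fast coordinates through the occupation measure, produces Relation~\eqref{eqLimitMeasGeneral}.

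The main obstacle, and the only substantive difference with Section~\ref{FastSec}, is the time-inhomogeneity: the slow coordinates are no longer frozen at their equilibrium but evolve along $(x(s))$, so one cannot integrate merely against a time-homogeneous fast measure. The correct bookkeeping requires the joint convergence of the slow process and the occupation measure, and the product structure of the representation~\eqref{ConvKurtzkgeq3}---the slow coordinates slaved to $x(s)$ while the fast ones are averaged against $\pi_s^{[2+]}$---is exactly what makes the limit of the mixed terms $\kappa_{ji}\overline{X}_j^N(s)\,\partial f/\partial x_i(\overline{X}_N(s))$ well defined.
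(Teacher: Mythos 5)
Your proposal is correct and follows essentially the same route as the paper, which simply invokes ``the same method as in the proof of Proposition~\ref{propEqLimit}'' applied along a convergent subsequence and then concludes via the product representation~\eqref{ConvKurtzkgeq3} (Relation~\eqref{Heq} in the text). You supply the details the paper leaves implicit --- in particular the correct bookkeeping of the slow-species terms $\kappa_{ji}(X_j^N)^{(1)}{=}\kappa_{ji}N\overline{X}_j^N$ and the need for the \emph{joint} convergence of $((\overline{X}^N_{[1]}(t)),\Lambda_N)$ from Proposition~\ref{PropTightGen} --- so there is nothing to correct.
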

\begin{proof}
We take a subsequence $((\overline{X}_{[1]}^{N_p}(t)),\Lambda_{N_p})$ converging in distribution to the random variable  $((x(t)),\Lambda_\infty)$.  The occupation measure of $(\overline{X}_i^N(t), i{\in}\I{2+})$ is converging in distribution to $\Lambda_\infty^{[2+]}$ defined by
  \[
\croc{\Lambda_\infty^{[2+]},f}=\int_0^T\int_{{\cal K}_{\I{2+}}} g(y)\pi_s^{[2+]}(\diff y)\diff s,
\]
for $f{\in}{\cal C}_c((\R_+)^{\I{2+}})$. Since the process $(\overline{X}_i^{N_p}(t),i{\in}\I{1}^*)$ converges in distribution, for the uniform norm on $[0,T]$, we obtain a representation of $\Lambda_\infty$,
\begin{equation}\label{Heq}
\croc{\Lambda_\infty,g}=\int_0^T \int_{{\cal K}_{I}}g(y)\pi_s(\diff y)\diff s=\int_0^T\int_{{\cal K}_{\I{2+}}} g(x(s),y)\pi_s^{[2+]}(\diff y)\diff s,
\end{equation}
for $g{\in}{\cal C}_c((\R_+)^{I^*})$. With the same method as in the proof of Proposition~\ref{propEqLimit}, the analogue of Relation~\eqref{eqLimitMeas} is established. We conclude the proof by using Relation~\eqref{Heq}. 
\end{proof}

\begin{proof}[Proof of Theorem~\ref{Theorem}]
In view of Theorem~\ref{Thk2}, we can assume $\I{1}{\ne}\emptyset$. 

First, lets identify $\Lambda_\infty$. Using Relation~\eqref{Heq}, we only have to identify the measure $\diff s\otimes \pi_s^{[2+]}(\diff y)$ on $\R_+\times (\R_+^*)^{\I{2, +}^*}$. 

We set for $t\in [0,T]$, $\overline{\kappa}^3(t)$ as follows: For $i$, $j{\in}\I{2+}^*$, $j{\ne}i$,   $\overline{\kappa}^3_{ij}(t){=}\kappa_{ij}$ and 
\[
\overline{\kappa}^3_{0i}(t)= \kappa_{0i}+\sum_{j\in \I{1}^*} x_j(t)\kappa_{ji}, \quad 
\overline{\kappa}^3_{i0}(t)=\kappa_{i0}+\sum_{j\in \I{1}^*} \kappa_{ij}.
\]

Relation~\eqref{eqLimitMeasGeneral}, can be rewritten as Relation~\eqref{eqLimitMeas}, for the set of indices $\I{2+}$ and the matrix $(\overline{\kappa}^{3}(t))\in \Omega(\I{2,p_2})^{[0, T]}$. The species $i\in \I{1}^*$ behave for the fast species as \emph{exterior input}, with rate time dependent.

The convergence of the measure $\pi^{[2+]}(\diff y)$ is then shown similarly as in the proof of Theorem~\ref{Thk2}. The only difference is the time dependence of the $\overline{\kappa}^3(t)$, which does not create any difficulty, since Proposition~\ref{PropTightGen} gives the continuity of $(\overline{\kappa}^{3}(t))$ on $[0, T]$. Using Relation~\eqref{Heq}, for $g{\in}{\cal C}_c((\R_+)^{I^*})$, we have 
\[
	\croc{\Lambda_\infty,g}=\int_0^T \int_{{\cal K}_{I}}g(y)\pi_s(\diff y)\diff s=\int_0^T g((x(s)),\ell(s))\diff s,
\] 
where for all $t\in [0, T]$ $\ell(t)$ is the unique solution of the system~\eqref{system2} of Proposition~\ref{InvK} for the set of indices $\I{2+}$ and the matrix $\overline{\kappa}^3(t)$. It is easily seen that for all $t\in [0, T]$, $$\ell(t)=L(x(t)),$$ where $L$ is defined in Relation~\eqref{LinearSysEll}. 

The convergence of the occupation measure is shown. 

For the identification of the function $(x_{[1]}(t)){=}(x_i(t), i{\in} \I{1}^*)$, integrating Relation~\eqref{SDE} and dividing it by $N$, we get for $t{\in}[0, T]$, $i{\in}\I{1}^*$:
\begin{multline}\label{eqX1Integrale}
	\overline{X}_i^N(t) = \overline{X}^N_i(0)+ M_i^N(t) +\sum_{j\in \I{1}^*\setminus\{i\}} \int_0^t\kappa_{ji}\overline{X}_j^N(s)\diff s \\
	+\int_0^t\sum_{j\in \I{2+}} \kappa_{ji}\frac{(X^N_j(s))^{(k_i)}}{N}\diff s -\kappa_{i}^+ \int_0^t \overline{X}_i^N(s)\diff s,
\end{multline}
where $(M_i^N(t))$ is a local martingale whose previsible increasing process is given by, for $t\leq T$, 
\begin{multline*}
	\croc{M_i^N}(t) = \frac{1}{N}\sum_{j\in \I{1}^*\setminus\{i\}} \int_0^t\kappa_{ji}\overline{X}_j^N(s)\diff s \\
	+\frac{1}{N}\sum_{j\in \I{2+}}\int_0^t \kappa_{ji}\frac{(X^N_j(s))^{(k_i)}}{N}\diff s +\frac{\kappa_{i}^+}{N} \int_0^t \overline{X}_i^N(s)\diff s. 
\end{multline*}
Using Doob's inequality and the bound of $(\overline{X}^N(t))$ on the event $\cal{E}_N$, we get the convergence in distribution of the martingales to $0$. 

Relation \eqref{TechnicEqApprox}, Lemma~\ref{lemaux}, and the convergence of $\pi_s^{2+}$ just proven, shows that for the convergence in distribution, for $j\in \I{2+}^*$, 
\[
	\lim_{N\to +\infty} \left(\int_0^t \kappa_{ji}\frac{(X^N_j(s))^{(k_i)}}{N}\diff s, t\in [0, T]\right) = \left(\int_0^t \kappa_{ji}(L_j(x(s)))^{k_j}\diff s, t\in [0, T]\right), 
\]
and therefore, taking $N$ to infinity in Relation~\eqref{eqX1Integrale}, we get for $t{\in} [0, T]$, $i{\in} \I{1}^*$:
\[
	x_i(t)= \alpha_i+\sum_{j\in \I{1}^*\setminus\{i\}} \int_0^t\kappa_{ji}x_j(s)\diff s \\
	+\int_0^t\sum_{j\in \I{2+}} \kappa_{ji}(L_j(x(s)))^{k_j}\diff s -\kappa_{i}^+ \int_0^t x_i(s)\diff s, 
\]
which is exactly Relation~\eqref{ODE}.

Since $(x_{[1]}(t))$ lives in ${\cal K}_{\I{1}}$, the solution of this ODE is unique, and therefore the identification of $(x_{[1]}(t))$ is complete. 

\end{proof}

Note that ODE~\eqref{ODE} can be rewritten as 
\[
	\dot{x}_i(t)= \overline{\kappa}^4_{0i}+\sum_{j\in \I{1}^*\setminus\{i\}} x_j(t)\overline{\kappa}^4_{ji}-x_i(t) \sum_{j\in I\setminus\{i\}} \overline{\kappa}^4_{ij},\quad i{\in} \I{1}^*,
\]
where $\overline{\kappa}^4{\in}\Omega(\I{1})$ is a matrix depending on the initial $\kappa$, constructed following the steps of the construction of $\overline{\kappa}^2$ in the Proof of Theorem~\ref{Thk2}. The $\overline{\kappa}^4$ can be given explicitly in terms of  a path between complexes of $\I{1}^*$. The simplified ODE corresponds to the ODE associated to a CRN with only the complexes 
$$\{\emptyset\}{\cup}\{S_i, i{\in}\I{1}^*\},$$
whith reactions defined by $\overline{\kappa}^4$. 
As an example, the limit $(x_4(t))$ of $(\overline{X}^N_4(t))$ in the CRN of Figure~\ref{FourSpecies} is solution of the ODE associated to the CRN 
\[
	\emptyset \mathrel{\mathop{\xrightleftharpoons[{\overline{\kappa}^4_{40}}]{{\overline{\kappa}^4_{04}}}}} S_4, 
\]
with 
\[
	\overline{\kappa}^4_{04}=\frac{\kappa_{01}\kappa_{12}\kappa_{24}}{\kappa_{1}^+\kappa_{2}^+} +\frac{\kappa_{01}\kappa_{13}\kappa_{34}}{\kappa_{1}^+\kappa_{3}^+}+\frac{\kappa_{01}\kappa_{12}\kappa_{23}\kappa_{34}}{\kappa_{1}^+\kappa_{2}^+\kappa_{3}^+} \quad \text{and} \quad \overline{\kappa}^4_{40}=\frac{\kappa_{43}\kappa_{30}}{\kappa_{3}^+}.
\]

\printbibliography

\end{document}